\newcommand{\Prob}{\mathbb{P}}
\newcommand{\XU}{[X({\bf U})]}
\newcommand{\G}{\ensuremath{\left}}    
\newcommand{\D}{\ensuremath{\right}}    
\newcommand{\Z}{\ensuremath{\mathbb{Z}}}    
\newtheorem{lemma}{Lemma}[section]
\newtheorem{theo}{Theorem}[section]
\newtheorem{prop}{Proposition}[section]
\newtheorem{coro}{Corollary}[section]
\begin{document}
\DeclareGraphicsExtensions{.pdf,.gif,.jpg}

%%%%%%%%%%%%%%%%%%%%%%%%%%%%%%%%%%%%%%%%%%%%%%%%%%%%%%%%%%%%%%%%%%%%%
\keywords{Perfect simulation, regeneration scheme, chains of infinite order}
%\subjclass[2000]{60G10 (primary), 60G99 (secondary).}
\subjclass[2000]{Primary 60G10; Secondary 60G99}

%%%%%%%%%%%%%%%%%%%%%%%%%%%%%%%%%%%%%%%%%%%%%%%%%%%%%%%%%%%%%%%%%

\title[Perfect simulation for chains with infinite memory]{Perfect simulation for stochastic chains of infinite memory: relaxing the continuity assumption}
%\title[Perfect simulation for chains with infinite memory]{Processes with long memory: regenerative construction and perfect simulation}

%\author{A.~Gallo}
\author{Sandro Gallo \& Nancy L. Garcia}
%  \address{Universidade de S\~ao Paulo\\ Instituto de Matem\'atica e estat\'istica\\ 1010 Rua do Mat\~ao\\ BP 66281
%CEP 
%05315-970
%   S\~ao Paulo\\ Brasil} 
%  \email{gsandro@ime.usp.br}

%\date{February 24, 2009} 

\thanks{SG is supported by a FAPESP fellowship (grant 2009/09809-1),
  NG is supported by CNPq grants 475504/2008-9 and 301530/2007-6.}

\begin{abstract}
  This paper is composed of two main results concerning  chains of
  infinite order which are not necessarily continuous. The first one is a decomposition of the transition probability
  kernel as a countable mixture of unbounded probabilistic context
  trees. This decomposition is used to design a simulation algorithm
  which works as a combination of the algorithms given by
  \cite{comets/fernandez/ferrari/2002} and \cite{gallo/2009}.  The
  second main result gives sufficient conditions on the kernel for this algorithm
  to stop after an almost surely finite number of steps. Direct consequences of this last result are existence and uniqueness of the stationary chain compatible with the kernel.
\end{abstract}

\maketitle

\section{Introduction}\label{introduction}

The goal of this paper is to construct a perfect simulation scheme for
chains of infinite order on a countable alphabet, compatible with
a transition probability kernel which is not necessarily
continuous. By a perfect simulation algorithm we mean an algorithm
which samples precisely from the stationary law of the process.

Perfect simulation for chains of infinite order was first done by
\cite{comets/fernandez/ferrari/2002} under the continuity
assumption. They used the fact (observed earlier by
\cite{kalikow/1990}) that under this assumption, the transition
probability kernel can be decomposed as a countable mixture of Markov
kernels. Then, \cite{gallo/2009} obtained a perfect simulation algorithm for
chains compatible with a class of unbounded probabilistic context
trees where each infinite size branch can be a discontinuity point.

In this paper, we consider a class of transition probability
kernels which are neither necessarily continuous nor necessarily
probabilistic context trees. In fact, the same infinite size branches
as in the context trees considered by \cite{gallo/2009} are allowed to be discontinuity points, and the
other branches must have a certain localized-continuity
assumption. Under these new assumptions, we obtain a Kalikow-type
decomposition of our kernels as a mixture of unbounded probabilistic context trees. The fact that our decomposition involves unbounded probabilistic context trees instead of Markov kernels (as it was the case for \cite{kalikow/1990}) seems to be ``the price to pay'' to allow discontinuities at some points.

As a consequence of this decomposition and some
minimum extra condition, we can
show that there exists at least one stationary chain compatible with our kernels,
extending the existing result stating that continuity was
sufficient. A perfect simulation is then constructed using this
decomposition together with the \emph{coupling from the past} (CFTP) method introduced in the seminal paper of \cite{propp/wilson/1996}.
One of the main consequence of the existence of a perfect simulation algorithm is the fact that there exists a unique stationary chain compatible with our kernels.

More precise explanations of what is done here are postponed to Section \ref{sec:motivation} since we need the notation and definitions given in Section 2. Our first main result, Theorem \ref{theo1} which is stated and
proved in Section 4, gives the decomposition which holds without the continuity condition.  In Section 5,
we explain how our perfect simulation works using Theorem
\ref{theo1}, and we present it under the form of the \emph{pseudo-code}, Algorithm 1.  After that, we state our second main theorem, Theorem
\ref{theo2} which says that Algorithm 1 stops almost surely after a
finite number of steps. Section \ref{laprova} is dedicated to the
proof of Theorem \ref{theo2}.  We finish this paper with some comments
and further questions.

\section{Notation and definitions}\label{Notations, definitions...}
Let $A$ be a countable alphabet. Given two integers $m\leq
n$, we denote by $a_m^n$ the string $a_m \ldots a_n$ of symbols in
$A$. For any $m\leq n$, the length of the string $a_m^n$ is denoted by
$|a_m^n|$ and is defined by $|a_m^n| = n-m+1$. For any
$n\in\mathbb{Z}$, we will use the convention that
$a_{n+1}^{n}=\emptyset$, and naturally $|a_{n+1}^{n}|=0$. Given two
strings $v$ and $v'$, we denote by $vv'$ the string of length $|v| +
|v'| $ obtained by concatenating the two strings. The concatenation of
strings is also extended to the case where $v$ denotes a semi-infinite
sequence, that is $v=\ldots a_{-2}a_{-1}$, $a_{-i}\in A$ for $i\geq1$.
If $n$ is a positive integer and $v$ a finite string of symbols in
$A$, we denote by $v^{n}=v\ldots v$ the concatenation of $n$ times
the string $v$.  We denote
$$
A^{-\mathbb{N}}=A^{\{\ldots,-2,-1\}}\,\,\,\,\,\,\textrm{ and }\,\,\,\,\,\,\,  A^{\star} \,=\, \bigcup_{j=0}^{+\infty}\,A^{\{-j,\dots, -1\}}\, ,
$$
which are, respectively, the set of all infinite strings of past symbols and the set of all finite strings of past symbols. 
The case $j=0$ corresponds to the empty string $\emptyset$. Finally, we denote by    $\underline{a}=\ldots a_{-2}a_{-1}$ the elements of $A^{-\mathbb{N}}$.

%\subsection{Family of transition probabilities and chains of infinite memory}

%This brief subsection aims to remember some basic definitions of the theory.

%\begin{defi}
%A \emph{family of probability transitions} is a function $P:A\times A^{-\mathbb{N}}\rightarrow [0,1]$ such that for any $\underline{a}\in A^{-\mathbb{N}}$
%\[
%\sum_{a\in A}P(a|\underline{a})=1.
%\]
%\end{defi}

%
%\begin{defi}
%A stationary chain $(X_{n})_{n\in\mathbb{Z}}$ of law $\mathbb{P}$ is compatible with a system of probability transitions $P$ if for any past $\underline{a}\in A^{-\mathbb{N}}$ and any $a\in A$ we have
%\begin{equation}\label{compatible}
%\mathbb{P}(X_{0}=a|X_{-\infty}^{-1}=\underline{a})=P(a|\underline{a}).
%\end{equation}
%\end{defi}
%These chains are 
%\begin{itemize}
%\item Bernoulli chains if $P(a|\underline{a})$ does not depend on $\underline{a}$,
%\item $k$-step Markov chains if $P(a|\underline{a})=P(a|a_{-k}^{-1})$ independently of $a_{-\infty}^{-k-1}$ for some finite $k$,
%\item of infinite memory otherwise.
%\end{itemize}

\subsection{Standard definitions}

A transition probability kernel (or simply \emph{kernel} in the
sequel) on an alphabet $A$ is a function
\begin{equation}
\begin{array}{cccc}
P:&A\times A^{-\mathbb{N}}&\rightarrow& [0,1]\\
&(a,\underline{a})&\mapsto&P(a|\underline{a})
\end{array}
\end{equation}
such that
\[
\sum_{a\in A}P(a|\underline{a})=1\,\,,\,\,\,\,\,\,\forall \underline{a}\in A^{-\mathbb{N}}.
\]
%In the present work, we only consider non-markovian family of transition probabilities. 
In this paper, we consider kernels $P$ which depends on an unbounded
part of the past, unlike the markovian case.  A stationary stochastic
chain ${\bf X}=(X_{n})_{n\in\Z}$ on $A$ having law $\mu$ is said to be
\emph{compatible} with a kernel $P$ if the later is a regular version
of the conditional probabilities of the former, that is
\begin{equation}\label{compa}
\mu(X_{0}=a|X_{-\infty}^{-1}=\underline{a})=P(a|\underline{a})
\end{equation}
for every $a\in A$ and $\mu$-almost every $\underline{a}$ in $A^{-\mathbb{N}}$. We  call these chains \emph{chains of infinite memory}.

\subsection{Probabilistic context tree}\label{pct}

We say that a kernel $P$ has a \emph{probabilistic context tree} representation if there exists a  function $d:A^{-\mathbb{N}}\rightarrow \mathbb{N}\cup\{+\infty\}$ such that for any two infinite sequences of past symbols $\underline{a}$ and $\underline{b}$
\[
a_{-d(\underline{a})}^{-1}=b_{-d(\underline{a})}^{-1}\Rightarrow P(a|\underline{a})=P(a|\underline{b}).
\]
It follows that the length $d(\underline{a})$ only depends on the suffix $a_{-d(\underline{a})}^{-1}$ of $\underline{a}$. This allows us to identify the set $\tau:=\{a_{-d(\underline{a})}^{-1}\}_{\underline{a}\in A^{-\mathbb{N}}}$  with  the set of leaves of a rooted tree where each node has either $|A|$ sons (internal node) or $0$ sons (leaf). The set $\tau$ is called the  \emph{context tree}, ``context'' being the original name \cite{rissanen/1983} gave to the strings
\[
c_{\tau}(\underline{a}):=a_{-d(\underline{a})}^{-1}
\]
when he introduced this model. A probabilistic context tree is an ordered pair $(\tau,p)$ where $\tau$ is a context tree and $p:=\{p(a|v)\}_{a\in A,v\in\tau}$ is a set of transition probabilities associated to each element of $\tau$. Thus, the probabilistic context tree $(\tau,p)$ \emph{represents} the kernel $P$ if for any $\underline{a}\in A^{-\mathbb{N}}$ and any $a\in A$
\[
P(a|\underline{a})=p(a|c_{\tau}(\underline{a})).
\]

Examples of probabilistic context trees are shown in Figures \ref{fig:finite-pct} (for the bounded case) and  \ref{fig:infinite-pct} (for the unbounded case). In the first one, at each leaf (context) of the tree we associate three boxes in which are given   the transition probabilities to each symbols of $A$ given this context. In the second one, we only specify the probability $p_{i}:=p(2|0^{i}2)$ (observe that we swap the order when we write the context in a conditioning), the transition probabilities to $1$ are simply  $1-p_{i}$.
%More general examples of unbounded context trees (without specifying the transition probabilities) are given by Figures \ref{fig:partition} and \ref{fig:partition2}.
\begin{figure}[h!]
\begin{center}
\subfigure[]{
\includegraphics[scale=0.9]{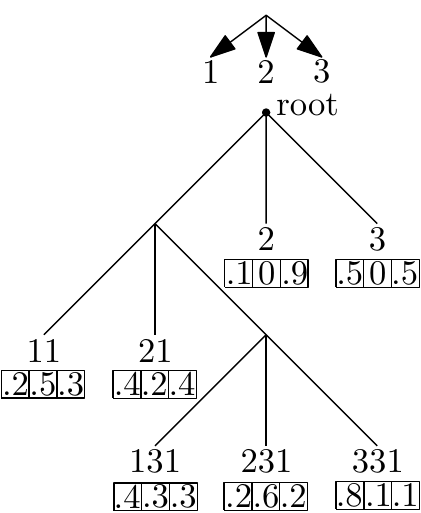}
\label{fig:finite-pct}}
\hspace{1cm}
\subfigure[]{
\includegraphics[scale=1]{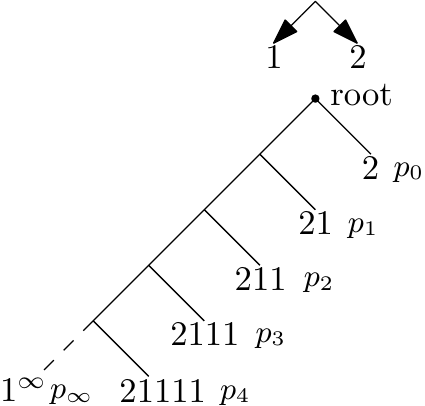}
\label{fig:infinite-pct}}
\caption{Examples of probabilistic context trees.}
\end{center}
\end{figure}

A stochastic chain ${\bf X}$ compatible, in the sense of
(\ref{compa}), with a probabilistic context tree is called a
\emph{chain with variable length memory}.

\section{Motivation}\label{sec:motivation}

The aim of this section is twofold: it motivates and explains at the same time, the present work.
%\subsection{Under the continuity assumption: \cite{comets/fernandez/ferrari/2002}}

\subsection{Countable mixture of Markov kernels  under the continuity assumption}

We say that a point (an infinite sequence of past symbols)
$\underline{a}$ is a continuity point for a given transition
probabilities kernel $P$ if
\[
\beta_{k}(\underline{a}):=\sup_{a\in A}\sup_{\underline{y},\underline{z}}|P(a|a_{-k}^{-1}\underline{y})-P(a|a_{-k}^{-1}\underline{z})|\stackrel{k\rightarrow+\infty}{\longrightarrow}0.
\]
%where we use the convention $a_{0}^{-1}=\emptyset$.  
Otherwise, we say that $\underline{a}$ is a discontinuity point for
$P$. $P$ is said to be continuous if
\begin{equation}
\label{eq:cont}
\beta_{k}:=\sup_{a_{-k}^{-1}}\beta_{k}(\underline{a})\stackrel{k\rightarrow+\infty}{\longrightarrow}0.
\end{equation}

% A more general (perhaps too much general) assumption, should be to
% consider the case where
%\[
%\beta_{k}(\underline{a}):=\sup_{a\in A}\sup_{\underline{y},\underline{z}}|P(a|a_{-k}^{-1}\underline{y})-P(a|a_{-k}^{-1}\underline{z})|
%\]
%goes to zero for a \emph{large set} (of measure one) of points $\underline{a}$.
\cite{kalikow/1990} showed that  continuous transition
probability kernels $P$ can be represented through the form of a countable
mixture of Markov kernels, that is, there exist two probability
distributions $\{p^{CFF}_{0}(a)\}_{a\in A}$ and
$\{\lambda^{CFF}_{k}\}_{k\geq0}$ and a sequence of Markov kernels
$\{p^{CFF}_{k}\}_{k\geq1}$ such that for any $a\in A$ and
$\underline{z}\in A^{-\mathbb{N}}$
\begin{equation}\label{odeles}
P(a|\underline{z})=\lambda^{CFF}_{0}p^{CFF}_{0}(a)+\sum_{k\geq1}\lambda^{CFF}_{k}p^{CFF}_{k}(a|z_{-k}^{-1}).
\end{equation} 
The superscript ``CFF'' refers to the fact that we will use the
definitions from \cite{comets/fernandez/ferrari/2002}.

Define an
$\mathbb{N}$-valued random variable $K^{CFF}$ taking value $k\geq0$
w.p. $\lambda^{CFF}_{k}$.  Decomposition (\ref{odeles}) means the
following. To choose the next symbol looking at the whole past
$\underline{z}$ using the distribution $\{P(a|\underline{z})\}_{a\in
  A}$ is equivalent to the following two steps procedure:
\begin{enumerate}
\item choose $K^{CFF}$,
\item 
\begin{enumerate}
\item if $K^{CFF}=0$, then choose the next symbol w.p. $\{p_{0}(a)\}_{a\in A}$,
\item if $K^{CFF}=k>0$ then choose the next symbol looking at $z_{-k}^{-1}$  and using $\{p^{CFF}_{k}(a|z_{-k}^{-1})\}_{a\in  A}$.
\end{enumerate}
\end{enumerate}
Observe that $K^{CFF}$ is independent of everything (in particular, it does not depend on $\underline{z}$).
%At each time $i$ there exists a random variable $k^{CFF}_{i}$ (independent of $k^{CFF}_{j}$ for $j\neq i$) taking value $k\in\mathbb{N}$ with probability $\lambda^{CFF}_{k}$, such that the state of the chain at time $i$ depends only on the last $k^{CFF}_{i}$ values of the chain. In the case where $k^{CFF}_{i}=0$, the state of the chain is decided independently of the rest, with probability $p^{CFF}_{0}(\cdot)$. In other words, suppose we are given an infinite past $\underline{z}$ and that $X_{-\infty}^{i-1}=\underline{z}$. Then, to construct the next value $X_{i}$ of the chain using $P$ is equivalent to pick up a random length $k_{i}$, and to decide $X_{i}$ independently of $X_{-\infty}^{i-k_{i}-1}$.

To clarify the parallel between this decomposition and the
decomposition presented in Theorem \ref{theo1}, we explain how
\cite{comets/fernandez/ferrari/2002} define the distribution
$\{\lambda^{CFF}_{k}\}_{k\geq0}$. For any $a\in A$ and $a_{-k}^{-1}\in
A^{k}$ they consider the functions
\[
\alpha^{CFF}_{0}(a)=\inf_{\underline{z}}P(a|\underline{z})\,\,\textrm{ and }\,\,
\alpha^{CFF}_{k}(a|a_{-k}^{-1})=\inf_{\underline{z}}P(a|a_{-k}^{-1}\underline{z})
\]
and the sequence $\{\alpha^{CFF}_{k}\}_{k\geq0}$
defined by $\alpha^{CFF}_{0}=\sum_{a\in A}\alpha^{CFF}_{0}(a)$ and for
any $k\geq1$
\[
\alpha^{CFF}_{k}=\inf_{a_{-k}^{-1}\in A^{w}}\sum_{a\in A}\alpha^{CFF}_{k}(a|a_{-k}^{-1}).
\]
These are, as they say, ``probabilistic threshold for memories limited
to k preceding instants.'' Taking the infimum over every $a_{-k}^{-1}$
is related to the continuity assumption (\ref{eq:cont}). In fact, to
assume continuity is equivalent to assume that $\alpha^{CFF}_{k}$ goes
to $1$ as $k$ diverges, and to assume punctual continuity in
$\underline{a}$ is equivalent to assume that
\[
\alpha_{k}^{CFF}(\underline{a}):=\sum_{a\in A}\alpha^{CFF}_{k}(a|a_{-k}^{-1})
\]
goes to $1$ as $k$ diverges.  Under the continuity assumption
(\ref{eq:cont}), the probability distribution
$\{\lambda^{CFF}_{k}\}_{k\geq0}$ used in (\ref{odeles}) is defined as
follows: $\lambda^{CFF}_{k}=\alpha^{CFF}_{k}-\alpha^{CFF}_{k-1}$ for
$k\geq1$ and $\lambda^{CFF}_{0}=\alpha^{CFF}_{0}$.

\subsection{Without the continuity assumption}

%To assume that $\beta_{k}(\underline{a})$ is a discontinuity point is equivalent to assume that $\sum_{a\in A}\alpha^{CFF}_{k}(a|a_{-k}^{-1})$ does not converge to $1$.
To fix ideas, in the remaining of this section, assume $P$ is a
transition probability kernel on $A=\{1,2\}$ which has a single
discontinuity point which is $1^{-\mathbb{N}}$. Then
$\alpha^{CFF}_{k}(\underline{a})$ goes to $1$ as $k$ diverges if and
only if $\underline{a}\neq1^{-\mathbb{N}}$.  In this case,
$\alpha^{CFF}_{k}$ does not converge to $1$ and the above result does
not apply. 

\subsubsection{The context tree assumption}\label{hahaaa}

\cite{gallo/2009}   assumed that $P$ is represented by the probabilistic context tree  $(\tau,p)$, where
\[
\tau=1^{-\mathbb{N}}\cup\bigcup_{i\geq0}\bigcup_{a^{-1}_{-\ell(i)}\in A^{\ell(i)}}a^{-1}_{-\ell(i)}\,2\,1^{i},
\]
$\ell:\mathbb{N}\rightarrow\mathbb{N}$ being  a deterministic function. This context tree is represented in Figure \ref{fig:thepartition}.
%This is what we call the context tree assumption, and the corresponding context tree is illustrated Figure \ref{fig:thepartition}. Let us call $\tau$ this context tree. Let us denote for any $i\geq0$ and $k>\ell^{2}(i)$
%\[
%P(a|1^{i}2a_{-k}^{-1}\underline{z})=p(a|1^{i}2a_{-\ell^{2}(i)}^{-1}).
%\]
%Therefore, our family $P$ is such that for any $a\in A$ and $\underline{z}\in A^{-\mathbb{N}}$
%\[
%P(a|\underline{z})=p(a|c_{\tau}(\underline{z})).
%\]
\begin{figure}
\centering
\includegraphics[scale=0.9]{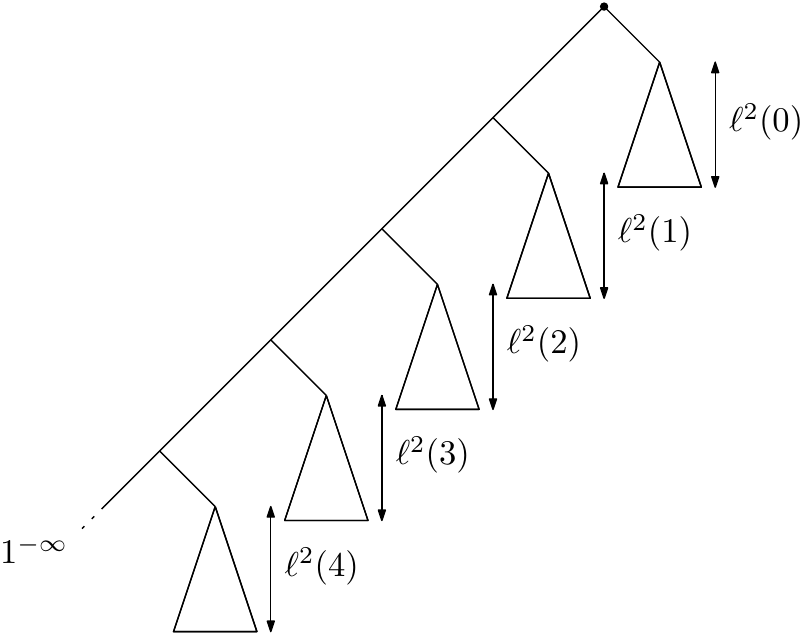}
\caption{}
\label{fig:thepartition}
\end{figure}
Observe that, under this assumption, for any $i\geq0$ and
$k>\ell(i)$ we have
\begin{equation}\label{contexttreeassumption}
P(a|1^{i}2a_{-k}^{-1}\underline{z})=P(a|1^{i}2a_{-k}^{-1}\underline{y}),
\quad \mbox{for any } \underline{z} \mbox{ and } \underline{y} \in
A^{-\mathbb{N}}.
\end{equation}
It follows that
\[
\inf_{a_{-k}^{-1}\in A^{k}}\sum_{a\in A}\alpha^{CFF}_{k+i+1}(a|1^{i}2a_{-k}^{-1})=1
\]
whenever $k>\ell(i)$. Therefore $\sum_{a\in A}\alpha^{CFF}_{k}(a|a_{-k}^{-1})$ goes to $1$ as $k$ diverges for any $\underline{a}\neq1^{-\mathbb{N}}$. We do not specify what happens for the point $\underline{a}=1^{-\mathbb{N}}$. Making a parallel with the above case, we can decompose such a kernel as follows: for any $a\in A$ and $\underline{z}$
\[
P(a|\underline{z})=\lambda^{CFF}_{0}p^{CFF}_{0}(a)+(1-\lambda^{CFF}_{0})p'(a|c_{\tau}(\underline{z}))
\]
where
\[
p'(a|c_{\tau}(\underline{z})):=\frac{p(a|c_{\tau}(\underline{z}))-\lambda_{0}^{CFF}p_{0}^{CFF}(a)}{1-\lambda_{0}^{CFF}}.
\]
Define an $\mathbb{N}$-valued random variable $K^{G}$ which takes value  $0$  w.p. $\lambda^{CFF}_{0}$ or $|c_{\tau}(\underline{z})|$ w.p. $1-\lambda^{CFF}_{0}$.
The context tree assumption for $P$ means the following. To choose the next symbol looking at the whole past $\underline{z}$ using the distribution $\{P(a|\underline{z})\}_{a\in A}$ is equivalent to the following two steps procedure:
\begin{enumerate}
\item choose $K^{G}$,
\item 
\begin{enumerate}
\item if $K^{G}=0$, choose the next symbol w.p. $\{p^{CFF}_{0}(a)\}_{a\in A}$,
\item if $K^{G}=|c_{\tau}(\underline{z})|$,   choose the next symbol looking at $c_{\tau}(\underline{z})$  and using $\{p'_{k}(a|c_{\tau}(\underline{z})\}_{a\in  A}$.
\end{enumerate}
\end{enumerate}
Observe that the random variable $K^{G}$ is a deterministic function of the past $\underline{z}$ whenever its value is not $0$: $K^{G}=|c_{\tau}(\underline{z})|$.
%In this case, there exists a random variable $k_{i}^{G}$ (the superscript ``G'' refer to \cite{gallo/2009})  associated to each time index $i$ which tells us the length of the dependence at time $i$. It takes value $0$ w.p. $\lambda_{0}^{CFF}$, and it takes value (recall the definition of $m^{w}$ in the present case where $w=2$) $k^{G}_{i}(\underline{z})=m^{2}(\underline{z})+1+\ell^{2}(m^{2}(\underline{z}))$ with probability $1-\alpha_{0}^{CFF}$. Whenever $k^{G}_{i}(\underline{z})\neq0$, it is a deterministic function of $\underline{z}$.
%In other words, the length of the dependence to the past is a deterministic function of the past $\underline{z}$ itself. 

\subsubsection{The countable mixture of probabilistic context trees}\label{altern}

So far, two extreme cases have been considered: $K^{G}$ is a
deterministic function of the past, and $K^{CFF}$ is a random variable
totally independent of the past. In the present work, we introduce a way to
combine these two approaches.  It allows us to consider kernels $P$
which are neither necessarily represented by a probabilistic context tree,
nor necessarily continuous. This new approach is based on the
assumption that
%, However, if we define for $k\geq1$ and $i\geq0$
%\[
%\alpha'_{k}(a|1^{i}2a_{-k}^{-1}):=\inf_{\underline{z}}P(a|1^{i}2a_{-k}^{-1}\underline{z})
%\]
%with the convention that $a_{0}^{-1}=\emptyset$ and then
\begin{equation}\label{assum}
\alpha_{k}:=\inf_{i\geq0}\inf_{a_{-k}^{-1}\in A^{k}}\sum_{a\in
  A}\alpha^{CFF}_{k+i+1}(a|1^{i}2a_{-k}^{-1}) \quad \stackrel{k
  \rightarrow \infty}{\longrightarrow} \quad 1.
\end{equation}
The $\alpha_{k}$'s are ``probabilistic threshold for memories going
until the $k^{\textrm{th}}$  instant preceding the last occurrence of
symbol $2$ in the past.'' In this case also, we have that $\sum_{a\in
  A}\alpha^{CFF}_{k}(a|a_{-k}^{-1})$ goes to $1$ as $k$ diverges for
any $\underline{a}\neq1^{-\mathbb{N}}$ and not necessarily for
$1^{-\mathbb{N}}$. Notice that the probabilistic context tree
assumption we introduced in Section \ref{hahaaa} only satisfies 
\[
\inf_{a_{-k}^{-1}\in A^{k}}\sum_{a\in
  A}\alpha^{CFF}_{k+i+1}(a|1^{i}2a_{-k}^{-1}) \quad \stackrel{k
  \rightarrow \infty}{\longrightarrow} \quad 1.\]
 Under assumption (\ref{assum}), it will be shown in the next section that there exists a probability distribution $\{\lambda_{k}\}_{k\geq0}$, and a sequence of probabilistic context trees $\{(\tau_{k},p_{k})\}_{k\geq0}$   such that 
\begin{equation}\label{mixture}
P(a|\underline{z})=\lambda^{CFF}_{0}p^{CFF}_{0}(a)+\sum_{k\geq0}\lambda_{k}p_{k}(a|c_{\tau_{k}}(\underline{z})).
\end{equation}
The  $k$th context tree of decomposition (\ref{mixture}) is given by
\begin{equation}\label{eq:3}
\tau_{k}:=1^{-\mathbb{N}} \,\cup\,\bigcup_{i\geq0} \bigcup_{a_{-k}^{-1}\in A^{k}}a_{-k}^{-1}\,2\,1^i.
\end{equation}
The sequence of context trees $\{\tau_{k}\}_{k\geq0}$ 
for the present particular case is illustrated in Figure
\ref{fig:sequencehu}. Define a random variable $K^{GG}$ taking
values $0$ w.p. $\lambda^{CFF}_{0}$ and
$|c_{\tau_{k}}(\underline{z})|$ w.p. $\lambda_{k}$ for $k\geq0$.  One
more time, let us translate this decomposition into a two steps
procedure:
\begin{enumerate}
\item choose $K^{GG}$,
\item 
\begin{enumerate}
\item if $K^{GG}=0$, choose the next symbol w.p. $\{p^{CFF}_{0}(a)\}_{a\in A}$,
\item if $K^{GG}=|c_{\tau_{k}}(\underline{z})|$ for some $k\geq0$,   choose the next symbol looking at $c_{\tau_{k}}(\underline{z})$  and using $\{p_{k}(a|c_{\tau_{k}}(\underline{z})\}_{a\in  A}$.
\end{enumerate}
\end{enumerate}
Observe that this time, the random variable $K^{GG}$ depends on the past $\underline{z}$, but through a random mechanism using the distribution $\{\lambda_{k}\}_{k\geq0}$.

\begin{figure}[htp]
\centering
\includegraphics{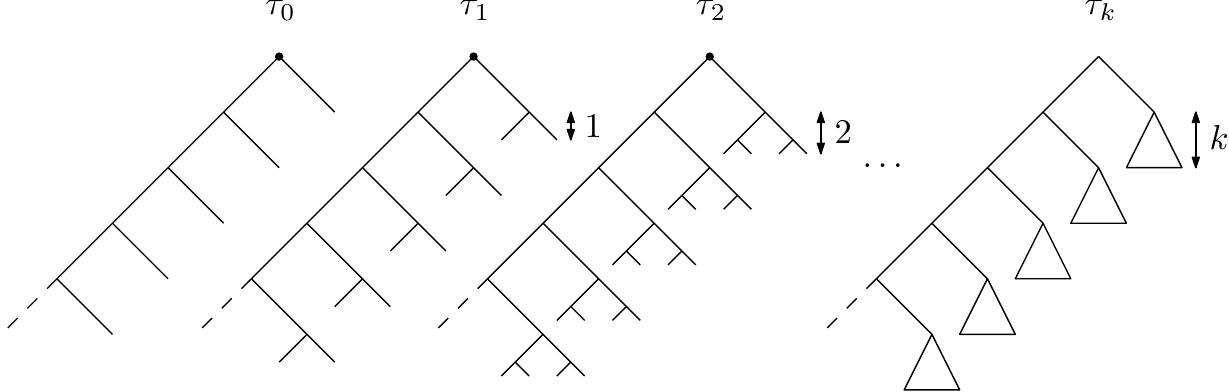}
\caption{}
\label{fig:sequencehu}
\end{figure}

In the next section, we state our first main result in a general
framework. The alphabet can be countable and the role which is played above
by symbol $2$ can be played by any finite string $w\in A^{\star}$. In
this case, we allow $P$ to have discontinuities at every point
$\underline{z}\in A^{-\mathbb{N}}$ which does not have $w$ as subsequence.

\section{First main result: a countable mixture of unbounded probabilistic context trees}\label{general}
%
%\subsection{The countable mixture representation of $P$}\label{formalize}
\subsection{Some more definitions and statement of the first results}
%We put
%\[
%\mathcal{E}:=\{a\in A:\inf_{\underline{z}}P(a|\underline{z})>0\}.
%\]
%Without loss of generality, we can write $A=\{1,2,3,\ldots\}$ and $\mathcal{E}=\{1,\ldots,\#\mathcal{E}\}$ where $\#\mathcal{E}$ denotes the cardinal of $\mathcal{E}$. 
Fix a finite size string $w\in A^{\star}$ and define the function $m^{w}$ which associates to any string  $a_{-m}^{-1}\in A^{m}$, $|w|\leq m\leq+\infty$ the distance to the first occurrence of $w$  when we look backward in $a_{-m},\ldots,a_{-1}$, that is 
\begin{equation}\label{lem}
m^{w}(a_{-m}^{-1})=\inf\{k\geq0:a_{-k-|w|}^{-k-1}=w\},
\end{equation}
we use the convention $m^{w}(a_{-m}^{-1})=+\infty$  if the set of indexes is empty.
Using this definition,  we introduce
\[
\mathcal{I}^{k}(\bar{w}):=\{(a_{-k},\ldots,a_{-1}):\underline{a}\in A^{-\mathbb{N}}\,\textrm{ and }\,m^{w}(\underline{a})=k\}
\] 
which is the set of strings $v$ of length $k$ such that there is a
unique occurrence of $w$ in the concatenation $wv$.
For $w,v\in A^{\star}$, $|w|\leq |v|$, we
use the abuse of notation $w\in v$ (\emph{resp.} $w\notin v$) which
means ``$w$ is (\emph{resp.} is not) a substring of $v$''. Then, for
any string $w\in A^{\star}$ and  $k\geq|w|$, we denote the set of the strings of
length $k$ in which $w$ does not appear as a subsequence by
\[
A^{k}(\bar{w}):=\{a_{-k}^{-1}\in A^{k}:a_{i}^{i+|w|-1}\neq
w,\,i=-k,\ldots,-|w|\}.
\]
Its complement is denoted by $A^{k}(w)=A^{k}\setminus
A^{k}(\bar{w})$. Finally, $A^{-\mathbb{N}}(\bar{w})$, denotes the set
of infinite sequences of past symbols $\underline{a}$ such that
$w\notin \underline{a}$, and
$A^{-\mathbb{N}}(w):=A^{-\mathbb{N}}\setminus
A^{-\mathbb{N}}(\bar{w})$.  Observe that 
$\mathcal{I}^{k}(\bar{w})$ can be different from $A^{k}(\bar{w})$.

\begin{theo}\label{theo1}
Consider a  transition probability kernel $P$ such that
\begin{equation}\label{condition1}
\alpha^{w}_{k}:=\inf_{i\geq0}\inf_{b_{-i}^{-1}\in \mathcal{I}^{i}(\bar{w})}\inf_{c_{-k}^{-1}\in A^{k}}\sum_{a\in A}\inf_{\underline{z}}P(a|b_{-i}^{-1}\,w\,c_{-k}^{-1}\,\underline{z})\stackrel{k\rightarrow+\infty}{\longrightarrow}1.
\end{equation}
Then, there exist two probability distributions $\{\lambda^{w}_{k}\}_{k\geq-1}$ and $\{p^{w}_{-1}(a)\}_{a\in A}$, and a sequence of probabilistic context trees $\{(\tau^{w}_{k},p^{w}_{k})\}_{k\geq0}$ such that 
\begin{equation}\label{mixture}
P(a|\underline{z})=\lambda^{w}_{-1}p^{w}_{-1}(a)+\sum_{k\geq0}\lambda^{w}_{k}p^{w}_{k}(a|c_{\tau^{w}_{k}}(\underline{z})).
\end{equation}
\end{theo}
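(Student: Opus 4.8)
The plan is to mimic the Kalikow construction of \cite{comets/fernandez/ferrari/2002}, but with the infimum taken only over pasts that extend a fixed occurrence pattern of $w$, so that the ``leftover'' mass after each stage is carried by a probabilistic context tree rather than a Markov kernel. First I would define, for each $i\ge 0$, each $b_{-i}^{-1}\in\mathcal{I}^i(\bar w)$ and each $c_{-k}^{-1}\in A^k$, the localized lower envelopes
\[
\alpha^{w}_{k}(a\mid b_{-i}^{-1}\,w\,c_{-k}^{-1}):=\inf_{\underline z}P(a\mid b_{-i}^{-1}\,w\,c_{-k}^{-1}\,\underline z),
\]
and set $\alpha^{w}_{-1}:=\alpha^{CFF}_0=\sum_{a\in A}\inf_{\underline z}P(a\mid\underline z)$, $\lambda^{w}_{-1}:=\alpha^{w}_{-1}$, $p^{w}_{-1}(a):=\alpha^{CFF}_0(a)/\alpha^{w}_{-1}$ (the case $\alpha^{w}_{-1}=0$ being handled by just omitting this term). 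Then, for $k\ge0$, put $\lambda^{w}_{k}:=\alpha^{w}_{k}-\alpha^{w}_{k-1}$ with the convention $\alpha^{w}_{-1}$ as above; assumption (\ref{condition1}) forces $\alpha^w_k$ to be nondecreasing in $k$ (since a longer conditioning string gives a larger infimum over extensions, exactly as in the CFF argument) and $\alpha^w_k\to1$, so $\{\lambda^w_k\}_{k\ge-1}$ is a genuine probability distribution.

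Next I would exhibit the context trees. The tree $\tau^{w}_{k}$ should be the one whose leaves are the strings that are ``just long enough to see $w$ plus $k$ more symbols'': concretely $\tau^{w}_k=A^{-\mathbb{N}}(\bar w)\cup\bigcup_{i\ge0}\bigcup_{c_{-k}^{-1}\in A^k}\{c_{-k}^{-1}\,w\,v:\ v\in\mathcal{I}^i(\bar w)\}$ — i.e. read the past backward until the first occurrence of $w$, then read $k$ more symbols, and stop; if $w$ never occurs the context is the whole infinite past. This is the general-$w$ analogue of (\ref{eq:3}), and one checks directly from the definition in Section \ref{pct} that it is a valid context tree (the length of the context depends only on the context itself). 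The transition probabilities $p^{w}_k$ are then forced: writing $c=c_{\tau^w_k}(\underline z)$, one sets
\[
p^{w}_k(a\mid c):=\frac{\alpha^w_{\,|c|\text{-localized}}(a\mid c)-(\text{mass already allocated at levels}\le k-1)}{\lambda^w_k},
\]
the numerator being the increment $\alpha^{w}_{k}(a\mid\cdot)-\alpha^{w}_{k-1}(a\mid\cdot)$ of the localized envelopes along the context $c$; one must verify these are nonnegative and sum to $\lambda^w_k$ over $a$, which again follows from monotonicity of the envelopes and the fact that $\sum_a\alpha^w_k(a\mid c)\ge\alpha^w_k$ by definition of $\alpha^w_k$ as an infimum. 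Finally, the residual $P(a\mid\underline z)-\lambda^w_{-1}p^w_{-1}(a)-\sum_{k\ge0}\lambda^w_kp^w_k(a\mid c_{\tau^w_k}(\underline z))$ is shown to be identically $0$ because the telescoping sum of the localized envelopes along the sequence of nested contexts seen along $\underline z$ converges to $\inf$'s that, in the limit, recover $P(a\mid\underline z)$ itself on the event $w\in\underline z$ — this is where (\ref{condition1}) does the real work — while on $A^{-\mathbb{N}}(\bar w)$ the single leaf of each $\tau^w_k$ carries whatever is left, and the sum still telescopes to $P(a\mid\underline z)$ since the $\lambda^w_k$ sum to $1-\lambda^w_{-1}$.

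The main obstacle I anticipate is bookkeeping the ``allocation'' correctly so that (i) each $p^w_k(\cdot\mid c)$ is a bona fide probability measure and (ii) the decomposition (\ref{mixture}) holds as an exact identity for \emph{every} $\underline z$, including the discontinuity pasts in $A^{-\mathbb{N}}(\bar w)$, rather than only $\mu$-a.e. The subtlety is that, unlike in the CFF Markov case, the amount of mass available to be split off at ``level $k$'' depends on $\underline z$ through the context $c_{\tau^w_k}(\underline z)$, so one cannot simply define $p^w_k$ globally; one has to check that the pointwise increments $\alpha^w_k(a\mid c_{\tau^w_k}(\underline z))-\alpha^w_{k-1}(a\mid c_{\tau^w_{k-1}}(\underline z))$ are consistent across the refinement $\tau^w_{k-1}\preceq\tau^w_k$ and that their total never exceeds $\lambda^w_k$ — equivalently, that the ``slack'' $\sum_a\alpha^w_k(a\mid c)-\alpha^w_k\ge0$ is distributed only into strictly later levels. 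Once that monotone-refinement structure is verified, the telescoping and the passage to the limit $k\to\infty$ using (\ref{condition1}) are routine, and the existence of the two probability distributions and the context trees follows.
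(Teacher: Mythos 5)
Your overall plan is the right one — the $\lambda^{w}_{k}=\alpha^{w}_{k}-\alpha^{w}_{k-1}$, the shape of $\tau^{w}_{k}$, and the role of (\ref{condition1}) all match the paper — but the crucial step, the definition of $p^{w}_{k}$, does not work as you stated it, and the argument you give to patch it is false.

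You define the numerator of $p^{w}_{k}(a\mid c)$ as the raw increment of the localized envelopes, $\alpha^{w}(a,\underline z,k)-\alpha^{w}(a,\underline z,k-1)$, and you assert that these increments sum over $a$ to $\lambda^{w}_{k}$ because of ``monotonicity of the envelopes and $\sum_{a}\alpha^{w}(a,\underline z,k)\ge\alpha^{w}_{k}$.'' This does not follow. Summing over $a$ gives
\[
\sum_{a}\bigl(\alpha^{w}(a,\underline z,k)-\alpha^{w}(a,\underline z,k-1)\bigr)
=\Bigl(\sum_{a}\alpha^{w}(a,\underline z,k)-\alpha^{w}_{k}\Bigr)
-\Bigl(\sum_{a}\alpha^{w}(a,\underline z,k-1)-\alpha^{w}_{k-1}\Bigr)+\lambda^{w}_{k},
\]
i.e.\ $\lambda^{w}_{k}$ plus the \emph{change in slack} between levels $k-1$ and $k$. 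The slack $\sum_{a}\alpha^{w}(a,\underline z,k)-\alpha^{w}_{k}\ge0$ is nonnegative at every level, but it is not monotone in $k$: there is nothing preventing a past $\underline z$ from being exactly extremal at level $k-1$ (slack $=0$) while being far from extremal at level $k$ (slack $>0$), in which case your raw increments sum to \emph{more} than $\lambda^{w}_{k}$. Thus your $p^{w}_{k}(\cdot\mid c)$ is in general not a probability measure, and the decomposition (\ref{mixture}) — even though it telescopes to the correct total — is not of the claimed form (a mixture of probabilistic context trees).

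The paper avoids this by never allocating the raw increment at level $k$. Instead it glues the increment intervals $I(a),I^{w}(a,\underline z,0),I^{w}(a,\underline z,1),\ldots$ end to end inside $[0,1[$ and, separately, slices $[0,1[$ into the deterministic bands $[\alpha^{w}_{k-1},\alpha^{w}_{k}[$. It then defines
\[
p^{w}_{k}(a\mid v):=\Prob\Bigl(U_{0}\in I(a)\cup\bigcup_{l=0}^{k}I^{w}(a,v,l)\Bigm| U_{0}\in[\alpha^{w}_{k-1},\alpha^{w}_{k}[\Bigr),
\]
i.e.\ the fraction of the $k$-th \emph{band} covered by the symbol-$a$ intervals. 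Because $\sum_{a}\alpha^{w}(a,v,k)\ge\alpha^{w}_{k}$, the band $[\alpha^{w}_{k-1},\alpha^{w}_{k}[$ is entirely covered by the first $k+1$ rounds of intervals, so these conditional probabilities automatically sum to $1$ and the excess slack is simply pushed into later bands rather than double-counted. This ``conditioning on the band'' is exactly the device that makes $p^{w}_{k}$ a genuine transition probability; your proposal needs to adopt it (or an equivalent) rather than the raw-increment normalization.
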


%As we mention in the introduction, allowing discontinuities at any points of $A^{-\mathbb{N}}(\bar{w})$

\begin{coro}\label{coro1}
  Under the same condition of  Theorem \ref{theo1}, if
\begin{equation}
\label{eq:corol1}
\inf_{a\in w}\inf_{\underline{a}\in A^{-\mathbb{N}}}P(a|\underline{a})>0,
\end{equation}
then there exists at least one stationary chain compatible with $P$ in
the sense of (\ref{compa}).
\end{coro}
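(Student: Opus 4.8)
The plan is to obtain a compatible stationary law as a weak limit of finite-memory approximations, by a compactness (Krylov--Bogolyubov-type) argument that parallels what \cite{comets/fernandez/ferrari/2002} do in the continuous case, with two modifications forced by the lack of continuity: the decomposition of Theorem \ref{theo1} plays the role that continuity of $P$ plays in the continuous case, and hypothesis (\ref{eq:corol1}) is used to guarantee that the limit measure is concentrated on pasts in which $w$ actually occurs, where the context-tree pieces of the decomposition \emph{are} continuous. Concretely, I would fix a reference past $\underline{a}^{\star}\in A^{-\mathbb{N}}$ and, for each $n\geq 1$, let $(X^{(n)}_{j})_{j\geq -n}$ be the process obtained by freezing the past before time $-n$ to $\underline{a}^{\star}$ and then generating $X^{(n)}_{-n},X^{(n)}_{-n+1},\dots$ one symbol at a time according to $P$ (equivalently, according to the two-step mechanism read off from (\ref{mixture})). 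Let $\nu_{n}$ be its law, suitably recentered so as to be seen on $A^{\mathbb{Z}}$, and let $\bar\nu_{n}:=\frac{1}{n}\sum_{m=0}^{n-1}\nu_{n}\circ\theta^{-m}$ be the Cesàro average of its shifts, which is shift-invariant up to an error of order $1/n$. After passing to a weakly convergent subsequence $\bar\nu_{n}\Rightarrow\mu$, the limit $\mu$ is shift-invariant, and it remains to check that it is compatible with $P$ in the sense of (\ref{compa}).

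The quantitative input is a uniform recurrence estimate for $w$, which is exactly where (\ref{eq:corol1}) enters. Writing $w=w_{1}\cdots w_{\ell}$ and $\delta:=\inf_{a\in w}\inf_{\underline{a}}P(a|\underline{a})$, which is positive by (\ref{eq:corol1}), iterating the kernel gives, for every past $\underline{z}$, that the conditional probability of producing the block $w$ at the next $\ell$ steps equals $\prod_{j=1}^{\ell}P(w_{j}\mid\underline{z}\,w_{1}\cdots w_{j-1})\geq\delta^{\ell}$; hence, conditioning successively along consecutive disjoint length-$\ell$ blocks, the $\bar\nu_{n}$-probability that $w$ does not appear among $L$ consecutive sites is at most $(1-\delta^{\ell})^{\lfloor L/\ell\rfloor}$, uniformly in $n$ and in the position of that block of sites. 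Applying this to the $L$ sites immediately before the origin, passing to the limit $\bar\nu_{n}\Rightarrow\mu$, and then letting $L\to\infty$, one gets $\mu\bigl(X_{-\infty}^{-1}\in A^{-\mathbb{N}}(\bar w)\bigr)=0$; that is, $w$ occurs in the past $\mu$-almost surely. The same estimate, applied coordinate by coordinate, also gives the tightness used above (which is anyway automatic when $A$ is finite).

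To conclude I would pass to the limit in the one-step identity. For any $a\in A$ and any bounded cylinder function $g$ of the past one has $\mathbb{E}_{\bar\nu_{n}}[\mathbf{1}\{X_{0}=a\}\,g(X_{-\infty}^{-1})]=\mathbb{E}_{\bar\nu_{n}}[P(a\mid X_{-\infty}^{-1})\,g(X_{-\infty}^{-1})]+O(1/n)$, the error coming from the boundary of the Cesàro average. The left-hand functional is continuous and bounded, hence converges under $\bar\nu_{n}\Rightarrow\mu$. For the right-hand side I would use the decomposition: the term $\lambda^{w}_{-1}p^{w}_{-1}(a)$ is a constant, and for each $k\geq 0$ the map $\underline{z}\mapsto p^{w}_{k}(a\mid c_{\tau^{w}_{k}}(\underline{z}))$ is locally constant --- hence continuous --- at every $\underline{z}\in A^{-\mathbb{N}}(w)$, since once $w$ occurs in $\underline{z}$ the context $c_{\tau^{w}_{k}}(\underline{z})$ is determined by finitely many coordinates of $\underline{z}$; moreover the series $\sum_{k\geq 0}\lambda^{w}_{k}p^{w}_{k}(a\mid c_{\tau^{w}_{k}}(\cdot))$ converges uniformly because $\sum_{k}\lambda^{w}_{k}\leq 1$, so its sum is continuous on the open set $A^{-\mathbb{N}}(w)$ as well. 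Thus $P(a\mid\cdot)$ is bounded and continuous off the closed set $A^{-\mathbb{N}}(\bar w)$, which by the previous step is $\mu$-null, so $P(a\mid\cdot)\,g(\cdot)$ is bounded and $\mu$-almost surely continuous, and the extended continuous mapping theorem yields $\mathbb{E}_{\bar\nu_{n}}[P(a\mid X_{-\infty}^{-1})g]\to\mathbb{E}_{\mu}[P(a\mid X_{-\infty}^{-1})g]$. Hence $\mathbb{E}_{\mu}[\mathbf{1}\{X_{0}=a\}\,g]=\mathbb{E}_{\mu}[P(a\mid X_{-\infty}^{-1})\,g]$ for every symbol $a$ and every bounded cylinder $g$, which is precisely (\ref{compa}).

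The step I expect to be the crux is this passage to the limit: weak convergence only sees continuous test functions, whereas $P$ is genuinely discontinuous, and reconciling the two is exactly what (\ref{eq:corol1}) is for --- all the discontinuities of the pieces $p^{w}_{k}(a\mid c_{\tau^{w}_{k}}(\cdot))$ sit inside $A^{-\mathbb{N}}(\bar w)$, which the uniform $w$-recurrence estimate renders $\mu$-negligible. Everything else --- tightness, the asymptotic shift-invariance of the Cesàro averages, and the interchange of the limit with the uniformly convergent series $\sum_{k}\lambda^{w}_{k}p^{w}_{k}$ --- is routine.
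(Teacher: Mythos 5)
Your argument follows the same route the paper intends: the paper gives no details at all, deferring to the proof of Theorem~11 in Kalikow (1990) and stating only that the decomposition of Theorem~\ref{theo1} together with hypothesis~(\ref{eq:corol1}) ensures that the limit of the empirical measures is compatible with $P$. Your reconstruction --- Cesàro limits of finite-memory approximations, the uniform $w$-recurrence bound from~(\ref{eq:corol1}) forcing $\mu\bigl(A^{-\mathbb{N}}(\bar w)\bigr)=0$, and $\mu$-a.e.\ continuity of $P(a\mid\cdot)$ via local constancy of each $p^{w}_{k}(a\mid c_{\tau^{w}_{k}}(\cdot))$ on $A^{-\mathbb{N}}(w)$ plus uniform convergence of the mixture series --- supplies exactly the details the paper delegates to Kalikow; the one overstatement is the parenthetical claim that the recurrence estimate ``applied coordinate by coordinate'' also yields tightness when $A$ is countably infinite (it controls occurrences of $w$, not the tails of the single-site marginals), but that issue is inherited from the source and does not touch the core of your argument.
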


Corollary \ref{coro1} follows from Theorem \ref{theo1} by the same
arguments used in proof of Theorem 11 in
\cite{kalikow/1990}. The decomposition of $P$ as mixture of
probabilistic context trees together with assumption (\ref{eq:corol1})
provide the necessary features to obtain that the limit of the
empirical measures is in fact compatible with $P$.

Since the assumption of Theorem \ref{theo1} is not intuitive, let us give sufficient conditions for Theorem \ref{theo1} to hold.

\begin{prop}\label{prop}
\begin{equation}\label{one}
\inf_{a_{-k}^{-1}\in A^{k}}\sum_{a\in A}\inf_{\underline{z}}P(a|a_{-k}^{-1}\underline{z})\stackrel{k\rightarrow+\infty}{\longrightarrow}1
\end{equation}
implies that
\begin{equation}\label{two}
\inf_{a_{-k}^{-1}\in A^{k}(w)}\sum_{a\in A}\inf_{\underline{z}}P(a|a_{-k}^{-1}\underline{z})\stackrel{k\rightarrow+\infty}{\longrightarrow}1
\end{equation}
which implies that $\alpha^{w}_{k}$ converges to $1$ as $k$ diverges.
\end{prop}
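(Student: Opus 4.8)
The plan is to establish the two implications separately, relying throughout on the trivial bound $\sum_{a\in A}\inf_{\underline{z}}P(a|v\underline{z})\leq 1$, valid for every finite string $v$ (fix one $\underline{z}^{*}$ and bound each summand by $P(a|v\underline{z}^{*})$), so that all the quantities in play are squeezed between their claimed limit and $1$.

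For the first implication I would simply observe that $A^{k}(w)\subseteq A^{k}$, whence
\[
\inf_{a_{-k}^{-1}\in A^{k}}\sum_{a\in A}\inf_{\underline{z}}P(a|a_{-k}^{-1}\underline{z})\;\leq\;\inf_{a_{-k}^{-1}\in A^{k}(w)}\sum_{a\in A}\inf_{\underline{z}}P(a|a_{-k}^{-1}\underline{z})\;\leq\;1 ,
\]
so that (\ref{two}) follows from (\ref{one}) by the squeeze.

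For the second implication the key remark is that, for every $i\geq 0$, every $b_{-i}^{-1}\in\mathcal{I}^{i}(\bar{w})$ and every $c_{-k}^{-1}\in A^{k}$, the concatenation $b_{-i}^{-1}\,w\,c_{-k}^{-1}$ is a string of length $m:=i+|w|+k$ which contains $w$ as a substring, hence belongs to $A^{m}(w)$. Consequently the inner quantity appearing in the definition of $\alpha^{w}_{k}$ is bounded below by $\inf_{v\in A^{m}(w)}\sum_{a\in A}\inf_{\underline{z}}P(a|v\underline{z})$. Then I would feed in (\ref{two}): given $\varepsilon>0$ there is $K$ such that this last infimum exceeds $1-\varepsilon$ as soon as $m\geq K$; since $m=i+|w|+k\geq k$, taking $k\geq K$ forces $m\geq K$ regardless of $i$, $b_{-i}^{-1}$ and $c_{-k}^{-1}$, so the bound $1-\varepsilon$ survives the three infima defining $\alpha^{w}_{k}$. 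Together with $\alpha^{w}_{k}\leq 1$ this gives $\alpha^{w}_{k}\to 1$.

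I expect no serious obstacle; the only point requiring a little care is the bookkeeping in the last step, namely that the effective memory length entering $\alpha^{w}_{k}$ is $i+|w|+k$ rather than $k$, and that this is harmless precisely because $i+|w|+k\geq k$ pushes the memory beyond any threshold uniformly in $i$. Marginal cases such as $A^{k}(w)=\emptyset$ for $k<|w|$, or $\mathcal{I}^{i}(\bar{w})=\emptyset$ for some $i$, affect only finitely many or inessential terms and do not disturb the limits.
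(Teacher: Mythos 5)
Your proof is correct and follows essentially the same route as the paper: the first implication is the trivial inclusion $A^{k}(w)\subset A^{k}$ together with the upper bound by $1$, and for the second implication both you and the paper use the key observation that $b_{-i}^{-1}\,w\,c_{-k}^{-1}\in A^{i+|w|+k}(w)$, so that the quantity inside the triple infimum defining $\alpha^{w}_{k}$ is bounded below by the expression in (\ref{two}) evaluated at length $i+|w|+k$. Where you diverge slightly is in how you close the argument: the paper defines $\alpha^{w}_{k,i}$, observes that it tends to $1$ along $k\to\infty$ for each fixed $i$ and along $i\to\infty$ for each fixed $k$, and concludes that $\inf_{i\ge 0}\alpha^{w}_{k,i}\to 1$. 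Read literally, those two marginal limits together with boundedness in $[0,1]$ do \emph{not} imply the conclusion (e.g.\ $f(k,i)=\mathbf{1}\{k\neq i\}$ has both marginal limits equal to $1$ but $\inf_i f(k,i)=0$ for all $k$), so the paper's stated deduction leaves a small logical gap. Your version closes it cleanly by exploiting that the lower bound depends on the single quantity $m=i+|w|+k\ge k$, so a threshold in $m$ coming from (\ref{two}) produces a threshold in $k$ that is uniform in $i$, $b_{-i}^{-1}$ and $c_{-k}^{-1}$. In short: same key inequality, but your handling of the final uniform-in-$i$ step is the more rigorous of the two.
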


A consequence of this proposition is that the assumption of Theorem
\ref{theo1} is weaker than the continuity assumption. It follows, in
particular, that the \emph{existence} result of Corollary \ref{coro1}
extends the well-known fact that existence hold whenever the 
transition probability kernel is continuous.

The proofs of Theorem \ref{theo1} and
Proposition \ref{prop} are given in Section \ref{sec:proof1}. 

Theorem \ref{theo1} is based on the existence of a triplet of
parameters (which is not unique): two probability distributions
$\{\lambda^{w}_{k}\}_{k\geq-1}$ and $\{p^{w}_{-1}(a)\}_{a\in A}$, and
a sequence of probabilistic context trees
$\{(\tau^{w}_{k},p^{w}_{k})\}_{k\geq0}$.  What follows is dedicated to
the definition of such a triplet of parameters. 

%$(\lambda^{w}_{k})_{k\geq-1}$ and $(p^{w}_{-1}(a))_{a\in A}$, and the sequence of probabilistic context trees $(\tau^{w}_{k},p^{w}_{k})$ for $k\geq0$. These are not unique, in fact

\subsection{A triplet of parameters
  $\left(\{\lambda^{w}_{k}\}_{k\geq-1},\,\{p^{w}_{-1}(a)\}_{a\in
      A},\,\{(\tau^{w}_{k},p^{w}_{k})\}_{k\geq0}\right)$}

We fix a string $w$ of $A^{\star}$. 
%To avoid overloaded notation,  we will omit the superscript $w$ in all the quantities depending on it, when no ambiguity is possible. 
The definition of our triplet is based on two partitions of $[0,1[$ inspired by \cite{comets/fernandez/ferrari/2002}.
Let us first show that for any $a\in A$, $i\geq0$, $b_{-i}^{-1}\in   \mathcal{I}^{i}(\bar{w})$ and $\underline{c}\in A^{-\mathbb{N}}$
\begin{equation}\label{ca}
\inf_{\underline{z}}P(a|b_{-i}^{-1}\,w\,c_{-k}^{-1}\underline{z})\stackrel{k\rightarrow+\infty}{\longrightarrow}P(a|b_{-i}^{-1}w\underline{c}).
\end{equation}
Observe that
\[
0\leq\sum_{a\in A}\G(P(a|b_{-i}^{-1}w\underline{c})-\inf_{\underline{z}}P(a|b_{-i}^{-1}\,w\,c_{-k}^{-1}\,\underline{z})\D)
=1-\sum_{a\in A}\inf_{\underline{z}}P(a|b_{-i}^{-1}wc_{-k}^{ -1}\underline{z}).
\]
Moreover,
\[
\sum_{a\in A}\inf_{\underline{z}}P(a|b_{-i}^{-1}wc_{-k}^{ -1}\underline{z})\geq \inf_{i\geq0}\inf_{b_{-i}^{-1}\in \mathcal{I}^{i}(\bar{w})}\inf_{c_{-k}^{-1}\in A^{k}}\sum_{a\in A}\inf_{\underline{z}}P(a|b_{-i}^{-1}\,w\,c_{-k}^{-1}\,\underline{z})
\]
therefore,  under assumption (\ref{condition1}), $\sum_{a\in A}\inf_{\underline{z}}P(a|b_{-i}^{-1}wc_{-k}^{ -1}\underline{z})$ goes to $1$ and
%\[
%\inf_{a_{-(k+i+|w|)}^{-1}\in A_{w}^{k+i+|w|}}\sum_{a\in A}\inf_{\underline{z}}P(a|a_{-(k+i+|w|)}^{-1}\underline{z}),
%\]
%therefore
\[
\sum_{a\in A}\G(P(a|b_{-i}^{-1}\,w\,\underline{c})-\inf_{\underline{z}}P(a|b_{-i}^{-1}\,w\,c_{-k}^{-1}\,\underline{z})\D)\stackrel{k\rightarrow+\infty}{\longrightarrow}0.
\]
%\[
%1-\inf_{a_{-(k+i+|w|)}^{-1}\in A_{w}^{k+i+|w|}}\sum_{a\in A}\inf_{\underline{z}}P(a|a_{-(k+i+|w|)}^{-1}\underline{z})
%\]
Since all the terms in the sum over $A$ are positive the convergence of (\ref{ca}) holds.
\subsubsection{Definition of the first partition of [0,1[}

We will denote for any $v\in A^{-\mathbb{N}}\cup A^{\star}$ such that
     $m^{w}(v)<+\infty$
\[
\alpha^{w}(a,v,k):=\inf_{\underline{z}}P\left(a\Big|v_{-m^{w}(v)}^{-1}\,w\,v_{-(m^{w}(v)+|w|+k)}^{-(m^{w}(v)+|w|+1)}\,\underline{z}\right)
\]
for $k=0,\ldots,|v|-m^{w}(v)-|w|$. 
%We use the convention that $v_{-m^{w}(v)-|w|-0}^{-m^{w}(v)-|w|-1}=\emptyset$.
This notation is not ambiguous since once we fix   $v$ and $k$, we automatically fix  $v_{-m^{w}(v)}^{-1}$ and $v_{-m^{w}(v)-|w|-k}^{-m^{w}(v)-|w|-1}$.
Now, let us introduce the partition which is illustrated in the upper part of  Figure \ref{fig:generalpartition}. Define for any $a\in A$
\[
\alpha(a):=\inf_{\underline{z}}P(a|\underline{z}),
\]
and the collection of  intervals $\{I(a)\}_{a\in A}$, each one having length $|I(a)|=\alpha(a)$. For any $v\in A^{\star}\cup A^{-\mathbb{N}}$ such that $m^{w}(v)<+\infty$ we define the collection of intervals  
\[
I^{w}(a,v,k)\,\,\,,\,\,\,a\in A\,\,,\,k=0,\ldots,|v|-m^{w}(v)-|w|,
\]
each one having length 
\[
|I^{w}(a,v,k)|=\alpha^{w}(a,v,k)-\alpha^{w}(a,v,k-1)
\]
for $k\geq1$, and 
\[
|I^{w}(a,v,0)|=
\alpha^{w}(a,v,0)-\alpha(a).
\]
Suppose now we are given an entire past $\underline{z}\in A^{-\mathbb{N}}(w)$, and glue these intervals in the following order
\[
I(1),I(2),\ldots,I^{w}(1,\underline{z},0),I^{w}(2,\underline{z},0),\ldots,I^{w}(1,\underline{z},1),I^{w}(2,\underline{z},1)\ldots
\]
in  such a way that the left extreme of $I(1)$ coincides with $0$, and the left extreme of each intervals coincides with the right extreme of the preceding interval. What we obtain, by the convergence (\ref{ca}), is  a partition of $[0,1[$ such that for any $a\in A$
\[
Leb\left(I(a)\cup\bigcup_{k\geq0}I^{w}(a,\underline{z},k)\right)=P(a|\underline{z}),
\]
where $Leb$ denote the Lebesgue measure on $[0,1[$.
It is important to notice that for any  $a$, $k$ and $\underline{z}\in A^{-\mathbb{N}}(w)$, we can construct the interval $I^{w}(a,\underline{z},k)$ knowing only the suffix $z_{-(k+|w|+m^{w}(\underline{z}))}^{-1}$.
\begin{figure}[htp]
\centering
\includegraphics[scale=0.9]{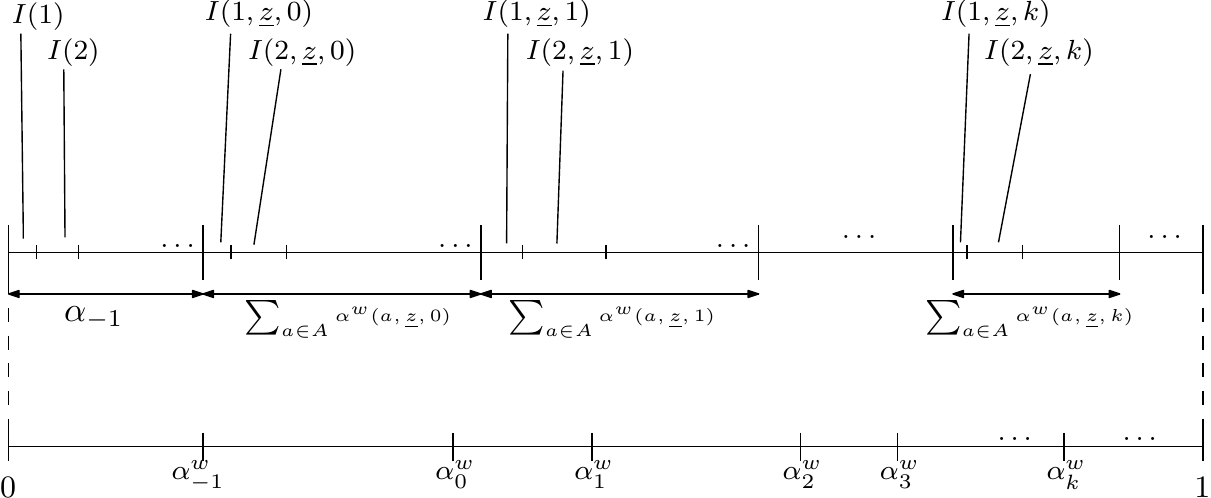}
\caption{Illustration of  the first partition (upper part) for a given past $\underline{z}\in A^{-\mathbb{N}}(w)$, and of the second partition (lower part) which does not depend on the past.}
\label{fig:generalpartition}
\end{figure}

\subsubsection{Definition of the second partition of [0,1[}
Observe that $\{\alpha^{w}_{k}\}_{k\geq0}$ is a $[0,1]$-valued non-decreasing sequence which converges to $1$ as $k$ diverges. It follows that denoting $\alpha^{w}_{-1}:=\sum_{a\in A}\alpha(a)$, and using the convention that $\alpha^{w}_{-2}=0$, the sequence of intervals $\{[\alpha^{w}_{k-1},\alpha^{w}_{k}[\}_{k\geq-1}$ constitutes a partition of $[0,1[$.
%\begin{equation}\label{partitionalpha}
%\bigcup_{k\geq-1}[\alpha^{w}_{k-1},\alpha^{w}_{k}[=[0,1[.
%\end{equation}
This partition is illustrated in the lower part of Figure \ref{fig:generalpartition}. 

\subsubsection{Definition of the triplet} Let us introduce an i.i.d. chain ${\bf U}=(U_{i})_{i\in\mathbb{Z}}$ of random variables uniformly distributed in $[0,1[$. We denote by $(\Omega,\mathcal{F},\mathbb{P})$ the corresponding probability space. It is the only probability space we will consider all along this paper. 
We now introduce one triplet $\left(\{\lambda^{w}_{k}\}_{k\geq-1},
  \,\{p^{w}_{-1}(a)\}_{a\in
    A},\,\{(\tau^{w}_{k},p^{w}_{k})\}_{k\geq0}\right)$ that will
give the decomposition stated in Theorem \ref{theo1}. Define
\begin{itemize}
\item For any $k\geq-1$
\begin{equation}\label{1}
\lambda^{w}_{k}:=\mathbb{P}(U_{0}\in[\alpha^{w}_{k-1},\alpha^{w}_{k}[).
\end{equation}
\item For any $a\in A$
\begin{equation}\label{2}
p^{w}_{-1}(a):=\mathbb{P}(U_{0}\in I(a)[|U_{0}\in [0,\alpha^{w}_{-1}[)=\alpha(a)/\alpha^{w}_{-1}.
\end{equation}
\item For any $k\geq0$, let
\begin{equation}\label{3}
\tau^{w}_{k}:=A^{-\mathbb{N}}(\bar{w})\,\cup\,\bigcup_{i\geq0}\bigcup_{b_{-i}^{-1}\in  \mathcal{I}^{i}(\bar{w})}\bigcup_{c_{-k}^{-1}\in A^{k}}c_{-k}^{-1}\,w\,b_{-i}^{-1}\,\,,\,\,\,k\geq0,
\end{equation}
and for $v\in\tau^{w}_{k}$  (that is, $|v|=m^{w}(v)+|w|+k$) we put
\begin{equation}\label{4}
p^{w}_{k}(a|v):= \left\{\begin{array}{ll}
\Prob\G(U_{0}\in
I(a)\cup\bigcup_{l=0}^{k}I^{w}(a,v,l)\Big|U_{0}\in[\alpha^{w}_{k-1},\alpha^{w}_{k}[\D)
& \mbox{ if } m^{w}(v)<+\infty \vspace{.2cm} \\
 \frac{P(a|v)-\lambda_{-1}^{w}p^{w}_{-1}(a)}{1-\lambda^{w}_{-1}} &
\mbox{otherwise.}
\end{array} \right.
\end{equation}
% \begin{equation}\label{4}
% p^{w}_{k}(a|v):= \Prob\G(U_{0}\in I(a)\cup\bigcup_{l=0}^{k}I^{w}(a,v,l)\Big|U_{0}\in[\alpha^{w}_{k-1},\alpha^{w}_{k}[\D)
% \end{equation}
% if $m^{w}(v)<+\infty$ and 
% \begin{equation}\label{semw}
% p^{w}_{k}(a|v):=\frac{P(a|v)-\lambda_{-1}^{w}p^{w}_{-1}(a)}{1-\lambda^{w}_{-1}}
% \end{equation}
%  otherwise.
\end{itemize}

Two examples of  sequences of context trees $\{\tau^{w}_{k}\}_{k\geq0}$ on $A=\{1,2\}$ are given in Figures \ref{fig:sequencehu} and \ref{fig:sequencedeux}. The first one with $w=2$, and  the second one  with $w=12$.
%\begin{figure}[htp]
%\centering
%\includegraphics{sequenceun}
%\caption{}
%\label{fig:sequenceun}
%\end{figure}
\begin{figure}[htp]
\centering
\includegraphics[scale=0.6]{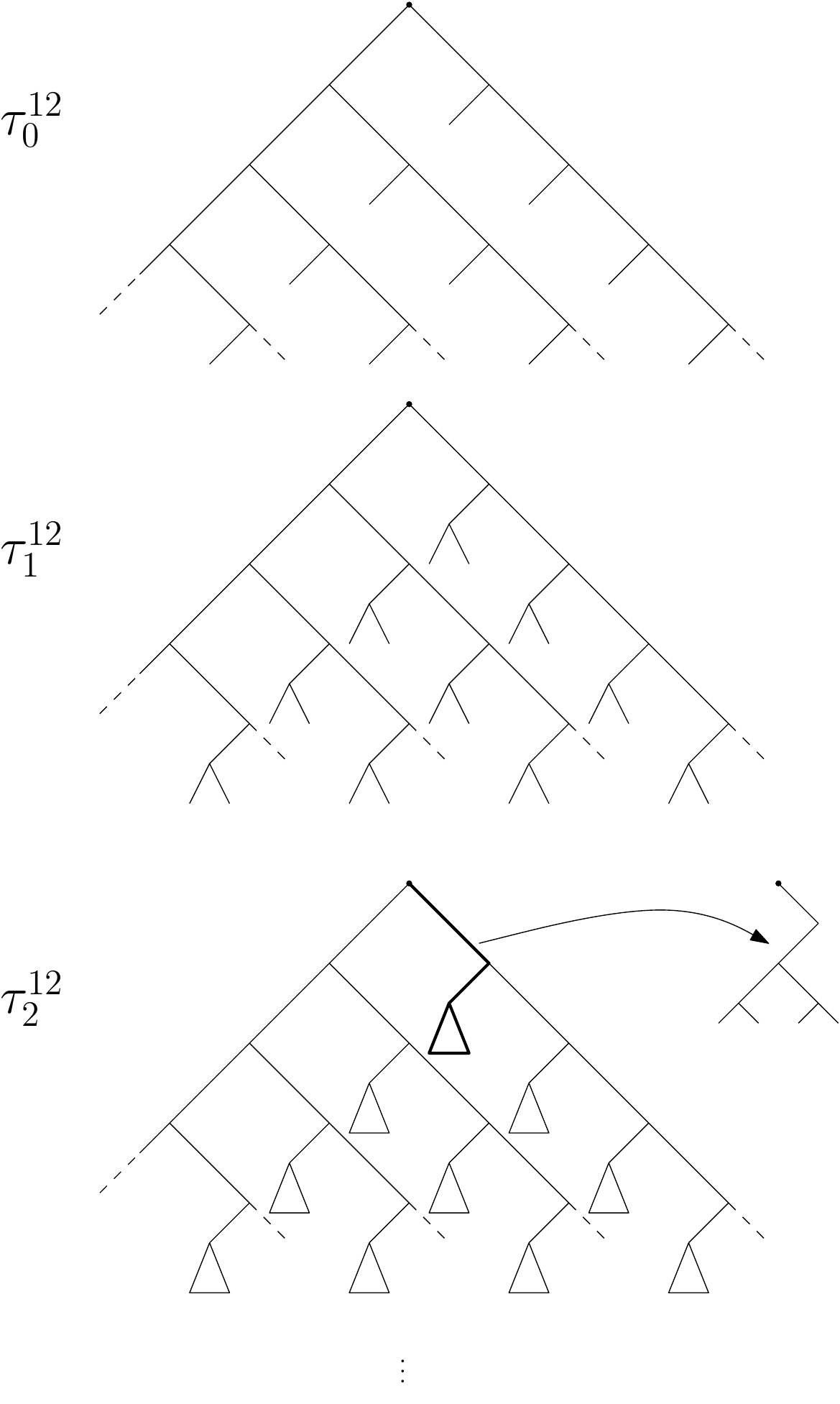}
\caption{}
\label{fig:sequencedeux}
\end{figure}

\subsection{Proofs of the results of this section}\label{sec:proof1}

\begin{proof}[Proof of Theorem \ref{theo1}]
What we have to prove is that equality (\ref{mixture}) holds with   the triplet $\left(\{\lambda^{w}_{k}\}_{k\geq-1}, \,\{p^{w}_{-1}(a)\}_{a\in A},\,\{(\tau^{w}_{k},p^{w}_{k})\}_{k\geq0}\right)$ introduced above.
On the one hand, using (\ref{ca}),   for any $a \in A$ and  $\underline{z}\in A^{-\mathbb{N}}(w)$ we have
\begin{equation}\label{lebesgue}
P(a|\underline{z})=\Prob(U_{0}\in I(a))+\mathbb{P}\left(U_{0}\in\bigcup_{k\geq0}I^{w}(a,\underline{z},k)\right),
\end{equation}
where the second term can be rewritten
\[
\sum_{k\geq0}\Prob(U\in [\alpha^{w}_{k-1},\alpha^{w}_{k}[)\Prob\G(U\in \bigcup_{l\geq0}I^{w}(a,\underline{z},l)\Big|U\in [\alpha^{w}_{k-1},\alpha^{w}_{k}[\D).
\]
%with the convention that $I(a)=I^{w}(a,\underline{z},-1)$ for any $a\in A$ and $\underline{z}$.
%On the other hand, by Lemma \ref{lemmaparticao2}, the sequence $(\alpha_{k})_{k\geq-1}$ converges to $1$ as $k$ diverges. Therefore, we can define the disjoint partition
%\[
%\bigcup_{k\geq-1}[\alpha_{k-1},\alpha_{k}[=[0,1[
%\]
%with the convention that $\alpha_{-2}=0$. 
On the other hand, by the definition of  $\alpha^{w}_{k}$, $k\geq-1$, we have $I(a)\subset[0,\alpha^{w}_{-1}[$ and  for any $k\geq0$
\[
[0,\alpha^{w}_{k}[\subset\bigcup_{a\in A}\bigcup_{l=0}^{k}I^{w}(a,\underline{z},l),
\]
it follows that for any $\underline{z}\in A^{-\mathbb{N}}(w)$, 
\begin{eqnarray*}
P(a|\underline{z}) & = & \lambda^{w}_{-1}\mathbb{P}(U_{0}\in
I(a)|U_{0}\in[0,\alpha^{w}_{-1}[) \\
 && +\sum_{k\geq0}\lambda^{w}_{k}\Prob\G(U\in I(a)\cup\bigcup_{l=0}^{k}I^{w}(a,\underline{z},l)\Big|U\in[\alpha^{w}_{k},\alpha^{w}_{k-1}[\D).
\end{eqnarray*}
It follows from (\ref{4}) that for any $\underline{z} \in
A^{-\mathbb{N}}$ and $a\in A$
\[
P(a|\underline{z})=\lambda^{w}_{-1}p^{w}_{-1}(a)+\sum_{k\geq0}\lambda^{w}_{k}p^{w}_{k}(a|c_{\tau^{w}_{k}}(\underline{z})).
\]
\end{proof}

\begin{proof}[Proof of Proposition \ref{prop}]
The fact that (\ref{one}) implies (\ref{two}) is clear since $A^{k}(w)\subset A^{k}$. Now let us show that (\ref{two}) implies (\ref{condition1}).
First, define for any $k\geq0$ and $i\geq0$
%\[
%\inf_{a_{-k}^{-1}\in A_{w}^{k}}\sum_{a\in A}\alpha^{w}\G(a,a_{-m^{w}(a_{-k}^{-1})}^{-1},a_{-k}^{-m^{w}(a_{-k}^{-1})-|w|-1}\D)\stackrel{k\rightarrow+\infty}{\longrightarrow}1
%\]
%and we observe that
\[
\alpha^{w}_{k,i}:=\inf_{b_{-i}^{-1}\in \mathcal{I}^{i}(\bar{w})}\inf_{c_{-k}^{-1}\in A^{k}}\sum_{a\in A}\inf_{\underline{z}}P\left(a\big|b_{-i}^{-1}\,w\,c_{-k}^{-1}\,\underline{z}\right)
\]
and observe that
\[
\inf_{a_{-(k+i+|w|)}^{-1}\in A^{k+i+|w|}(w)}\sum_{a\in A}\inf_{\underline{z}}P\G(a\Big|a_{-(k+i+|w|)}^{-1}\,\underline{z}\D)\leq\alpha^{w}_{k,i}.
\]
Thus, condition (\ref{condition1}) implies at the same time that
$\alpha^{w}_{k,i}\stackrel{k\rightarrow+\infty}{\longrightarrow}1$ for
any fixed $i$ and
$\alpha^{w}_{k,i}\stackrel{i\rightarrow+\infty}{\longrightarrow}1$ for
any fixed $k$.  Since $\alpha^{w}_{k,i}$ belongs to $[0,1]$ for any
$k$ and $i$, it follows that
$\inf_{i\geq0}\alpha^{w}_{k,i}$($:=\alpha_{k}^{w}$) also goes to $1$
as $k$ diverges.

\end{proof}

\section{Second main result: perfect simulation}\label{algorithm}

In this section, we present the perfect simulation algorithm and state the second main result of this paper, which gives sufficient conditions for the algorithm to stop after a  $\mathbb{P}$-a.s. finite number of steps. 
\subsection{Explaining how our algorithm works}

The algorithm works as a mixture of the algorithm presented in
\cite{gallo/2009} and the one of
\cite{comets/fernandez/ferrari/2002}. 

 Assume that the set
\[
\mathcal{E}:=\{a\in A:\inf_{\underline{z}}P(a|\underline{z})>0\}
\]
is not empty, and let $\mathcal{E}^{\star}$ denotes the set of finite
strings of symbols of $\mathcal{E}$.

Consider a transition probability kernel $P$ satisfying the condition
of Theorem \ref{theo1} with reference string $w \in
\mathcal{E}^{\star}$. In order to simplify the notation, we will omit
the superscript $w$ in most of the quantities that depend on this
string.
%This is the distance to the last occurrence of $w$ in $a_{-k}^{-1}$. 

%\begin{figure}[htp]
%\centering
%\includegraphics{generalpartition}
%\caption{Illustration of the partition of $[0,1[$ with the disjoint intervals $\{J(a|\emptyset)\}_{a\in \mathcal{E}}$ and $\{J(a|v)\}_{a\in \mathcal{A}}$ for some $v\in\tau$.}
%\label{fig:partitionof[0,1]}
%\end{figure}

We want to get a deterministic measurable function
$X:[0,1[^{\mathbb{Z}}\rightarrow A^{\mathbb{Z}}$, ${\bf U}\mapsto
X({\bf U})$ such that the law $\mathbb{P}( X({\bf U})\in \cdot )$ is
compatible with $P$ in the sense of (\ref{compa}). The idea is to use
the sequence ${\bf U}$ together with the partitions of $[0,1[$
introduced before (and illustrated in Figure
\ref{fig:generalpartition}) to mimic the two steps procedure we
described in Section \ref{altern}.  

In particular, for any $n\in\mathbb{Z}$, we put
$\XU_{n}=a$ whenever $U_{n}\in I(a)$.
Suppose that for some time index $n\in\mathbb{Z}$ there exists a
string $a_{-k}^{-1}\in A^{k}$ such that $U_{n-i}\in
I(a_{-i}),\,i=1,\ldots,k$, in this case, we put
\[
\XU^{n-1}_{n-k}=a_{-k}^{-1}.
\] 
We say that this sample  has been \emph{spontaneously} constructed. Now suppose $U_{n}\in[\alpha_{l-1},\alpha_{l}[$ for some $l\geq0$. This means that we pick up the context tree $\tau_{l}$ in the countable mixture representation of $P$, and look whether or not there exists a context in $\tau_{l}$ which is suffix of $\XU^{n-1}_{n-k}=a_{-k}^{-1}$. If there exists such a context, then we put
\[
\XU_{n}=\sum_{a\in A}a.{\bf 1}\left\{U_{n}\in \bigcup_{j=0}^{l}I(a,a_{-k}^{-1},j)\right\}.
\]
If there is no such context (we will write $c_{\tau_{l}}(a^{-1}_{-k})=\emptyset$) we cannot construct the state $\XU_{n}$: we need more knowledge of the past. 
In the first case, $\XU^{n}_{n-k}$ has been constructed independently of $U_{-\infty}^{n-k-1}$ and $U_{n+1}^{+\infty}$. 
%Fix $l\geq0$ and assume that for each $j$, $j=0,\ldots,l$, the symbols assigned to $\XU_{n-k}^{n+j-1}$ and the value of $U_{n+j}$ have been such that we were in situations (i) or (ii). This means that we can construct $\XU_{n-k}^{n+l}$ without specifying the past symbols $\XU_{-\infty}^{n-k-1}$.
Now suppose we want to construct $\XU_{0}$. We generate backward in
time the $U_{i}$'s until the first time $k\leq0$ such that we can
perform the above construction from time $k$ up to time $0$ using only
$U_{k}^{0}$. A priori, there is no reason for $k$ to be
finite. Theorem \ref{theo2} gives sufficient conditions for $k$ to be
finite $\mathbb{P}$-almost surely.

To formalize what we just said, let us define for any $u\in[0,1[$
\[
\ell(u)=\sum_{k\geq-1}k.{\bf 1}\{ u\in[\alpha_{k-1},\alpha_{k}[\}.
\]
%To formalize the preceding discussion, we introduce the measurable function $L:[0,1[\times (\emptyset\cup A^{\star}\cup A^{-\mathbb{N}})\rightarrow \mathbb{N}$ which is defined for any $a_{m}^{n}\in A^{\star}\cup A^{-\mathbb{N}}$, $-\infty\leq m\leq n+1$ by
%\begin{equation}
%L(u,a_{m}^{n})=\left\{
%\begin{array}{ccc}
%c_{\tau_{\ell(u)}}&\textrm{if}&\ell(u)\geq0\\
%0&\textrm{if}&\ell(u)=-1.
%\end{array}
%\right.
%\end{equation}
%%\begin{equation}
%%L(u,a_{m}^{n})=\left\{
%%\begin{array}{ccc}
%%\ell(u)+|w|+m^{w}(a_{m}^{n})&\textrm{if}&\ell(u)\geq0\\
%%0&\textrm{if}&\ell(u)=-1.
%%\end{array}
%%\right.
%%\end{equation}
%with the conventions that $a_{n+1}^{n}=\emptyset$ and $m^{w}(\emptyset)=+\infty$.
%At each time $i\in\mathbb{Z}$, if $L(U_{i},\XU_{i-k}^{i-1})>0$, it represents the size of the suffix of $\XU_{i-k}^{i-1}$ we need to know in order to construct $\XU_{i}$. 
%% If we only constructed $\XU_{i-k}^{i-1}$ and $L(U_{i},\XU_{i-k}^{i-1})>k$, then we cannot construct $\XU_{i}$: we need to know more past values of $X({\bf U})$. 
%$L(U_{i},\XU_{i-k}^{i-1})=0$ means that $\XU_{i}$ is spontaneously constructed. 
By Theorem \ref{theo1}, $\ell(U_{i})=-1$ means that we can choose the
state of $X({\bf U})_{i}$ according to distribution $p_{-1}(\cdot)$,
and independently of everything else. On the other hand,
$\ell(U_{i})=l\geq0$  means that we have to use the context tree
$(\tau_{l},p_{l})$ in order to construct the state of $X({\bf
  U})_{i}$. In particular, we  recall that for any $l\geq0$ the size
of the context $c_{\tau_{l}}(a_{m}^{n})$ is $m^{w}(a_{m}^{n})+|w|+l$. \\

\noindent One of the inputs for Algorithm 1 is the \emph{update
  function} $F$. It is a measurable function
$F:[0,1[\times(\emptyset\cup A^{\star}\cup A^{-\mathbb{N}})\rightarrow
A\cup\{\star\}$ which uses the part of the past we already know
and the uniform random variable to compute the present state. It is
defined as follows: for any $a_{m}^{n}\in \emptyset\cup A^{\star}\cup
A^{-\mathbb{N}}$, with $-\infty<n<+\infty$ and $-\infty\leq m\leq
n+1$,
\begin{equation}
F(u,a_{m}^{n}):=\left\{
\begin{array}{ll}
\sum_{a\in A}a.{\bf 1}\{u\in I(a)\}&\textrm{if }\ell(u)=-1\\
\sum_{a\in A}a.{\bf 1}\left\{u\in
  \bigcup_{k=0}^{\ell(u)}I(a,a_{m}^{n},k)\right\}&\textrm{if }\ell(u)\geq0\,\,\textrm{and}\,\,c_{\tau_{\ell(u)}}(a_{m}^{n})\neq\emptyset\\
\star&\textrm{otherwise}
\end{array}
\right.
\end{equation}
%
%\begin{equation}\label{F}
%F(u,a_{m}^{n})=\sum_{a\in A}a.{\bf 1}\left\{I^{w}(a,a_{m}^{n},k)\right\}+\star\mathbbm{1}\{u\in [\#\mathcal{E}\epsilon,1[,\, c_{\tau}(a_{m}^{n})=\emptyset\},
%\end{equation}
with the conventions that $a_{n+1}^{n}=\emptyset$  and  for any context tree $\tau$, $c_{\tau}(\emptyset)=\emptyset$.
%This is our \emph{update function}: it is the function that compute the present state using the known past and the uniform random variable. 
When we consider an infinite past $\underline{z}\in A^{-\mathbb{N}}(w)$,  we have by (\ref{lebesgue}), for any $u\in[0,1[$
\begin{equation}\label{lebe}
\mathbb{P}(F(u,\underline{z})=a)=\mathbb{P}\left(u\in I(a)\cup \bigcup_{k\geq0}I^{w}(a,\underline{z},k)\right)=P(a|\underline{z}).
\end{equation}
When the  update function returns the symbol $\star$, it means that we
do not have sufficient knowledge of the past to compute the present state. 

We define, for any $m\leq n$, the $\mathcal{F}(U_{m}^{n})$-measurable function $\mathcal{L}: [0,1[^{n-m+1}\rightarrow \{0,1\}$ which takes value $1$ if, and only if, we can construct $\XU_{m}^{n}$ independently of $U_{-\infty}^{m-1}$ and $U_{n+1}^{+\infty}$ using the construction described above. Formally
\[
\{\mathcal{L}(U_{m}^{n})=1\}:=\bigcup_{a_{m}^{n}\in A^{n-m+1}}\bigcap_{i=m}^{n}\{F(U_{i},a_{m}^{i-1})=a_{i}\}.
\]
Finally, for any $-\infty<m\leq n\leq+\infty$, we define
 the \emph{regeneration time} for the window $[m,n]$ as  the first
 time before $m$ such that the construction described above is
 successful until time $n$, that is
\begin{equation}\label{formaldef}
\theta[m,n]:=\max\{k\leq m:\mathcal{L}(U_{k}^{n})=1\}
\end{equation}
with the convention that  $\theta[m]:=\theta[m,m]$.

\subsection{The algorithm}

This algorithm takes as ``input'' two integers $-\infty<m\leq
n<+\infty$ and the update function $F$, and returns as ``output'' the
regeneration time $\theta[m,n]$ and the constructed sample $[X({\bf
  U})]_{\theta[m,n]}^{n}$. The function $F$ contains all the
information we need about $P$, and we suppose that it is already
implemented in the software used for programing the algorithm.

\begin{algorithm}[h]
\caption{Perfect simulation algorithm of the sample $\XU_{m}^{n}$} 
\begin{algorithmic}[1]
%\vspace{0.1cm}
%\STATE {\it Input:} $m$, $n$; {\it Output:} $\theta$, $(X_{\theta},\ldots,X_{0})$
\STATE {\it Input:} $m$, $n$, $F$; {\it Output:} $\theta[m,n]$, $(\XU_{\theta[m,n]},\ldots,\XU_{n})$
%\vspace{0.1cm}
%\STATE  $m \leftarrow 1$, $n \leftarrow 0$, $\theta \leftarrow 0$, $X_{0}=\star$ 
%\STATE  $\XU_{m}^{n}=\star^{n-m+1}$ 
%\vspace{0.1cm}
%\WHILE{$X_{0}=\star$} 
\STATE  Sample $U_{m},\ldots,U_{n}$ uniformly in $[0,1[$\\
%\vspace{0.1cm}
\STATE $i \leftarrow m$, $B=\{m,\ldots,n\}$, $\theta[m,n] \leftarrow m$, $\XU_{m}^{n}\leftarrow\star^{n-m+1}$\\
%\vspace{0.1cm}
\WHILE{$F(U_{i},\XU_{m}^{i-1})\in A$ and $B\neq \emptyset$}                           
%\vspace{0.1cm}
\STATE $\XU_{i}\leftarrow F(U_{i},\XU_{m}^{i-1})$\\
%\vspace{0.1cm}
\STATE $B\leftarrow B\setminus \{i\}$\\
\STATE $i\leftarrow i+1$
%\vspace{0.1cm}
\ENDWHILE\\
%\vspace{0.1cm}
\STATE $i\leftarrow m$
\WHILE{$B\neq\emptyset$} 
%\vspace{0.1cm}
\STATE $i\leftarrow i-1$\\
\STATE $B\leftarrow B\cup\{i\}$\\
%\vspace{0.1cm}
\STATE Sample $U_{i}$ uniformly in $[0,1[$\\
%\vspace{0.1cm}
%define $Y_{m}=F(U_{m},\emptyset)$
%\WHILE{$Y_{m}=\star$}
\WHILE {$U_{i}\in[\#\mathcal{E}\epsilon,1[$}
%\vspace{0.1cm}
\STATE $i\leftarrow i-1$\\
\STATE $B\leftarrow B\cup\{i\}$\\
%\vspace{0.1cm}

%\STATE $j\leftarrow i$\\

%\vspace{0.1cm}
\STATE Sample $U_{i}$ uniformly in $[0,1[$\\
%\vspace{0.1cm}
%define $Y_{m}=F(U_{m},\emptyset)$
\ENDWHILE\\
%\vspace{0.1cm}
%$X_{m}\leftarrow Y_{m}$
\STATE $\XU_{i}\leftarrow F(U_{i},\emptyset)$
\STATE $B\leftarrow B\setminus\{i\}$\\
\STATE $t\leftarrow \min B$
%\vspace{0.1cm}
%\WHILE{$L(X_{m}^{n})=1$ and $n<0$}
%\WHILE{$L(X_{m}^{n},U_{n+1})\neq\Lambda$ and $n<0$}
%\STATE $X_{n+1}=f\G(U_{n+1},X_{n-L(X_{m}^{n},U_{n+1})}^{n}\D)$\\
\WHILE{$F(U_{t},\XU_{i}^{t-1})\in A$ and $B\neq\emptyset$}
%\vspace{0.1cm}
\STATE $\XU_{t}\leftarrow F(U_{t},\XU_{i}^{t-1})$\\
\STATE $B\leftarrow B\setminus\{t\}$\\
%\vspace{0.1cm}
\STATE $t\leftarrow \min B$
%\vspace{0.1cm}
\ENDWHILE\\
%\vspace{0.1cm}
\ENDWHILE\\
%\vspace{0.1cm}
\STATE $\theta[m,n]\leftarrow i$
%\vspace{0.1cm}
\RETURN $\theta[m,n]$, $(\XU_{\theta[m,n]},\ldots, \XU_{n})$
%\vspace{0.1cm}
%\RETURN $\theta$, $(X_{\theta},\ldots, X_{0})$
\end{algorithmic}
\end{algorithm}

At each time, the set $B$ contains the sites that remains to be
constructed. At first $B = \{m, \ldots, n\}$ and a forward procedure
(lines 2--8) tries to construct $[X({\bf U})]_{m}^n$ using $U_m,
\ldots, U_n$. If it succeeds, then the algorithm stops and returns
$\theta[m,n]=m$ and the constructed sample. If it fails, $B$ is not
empty and a backward procedure (``while loop'': lines 10--27) begins.
In this loop, each time the algorithm cannot construct the next site
of $B$, it generates a new uniform random variable backward in
time. At each new generated random variable, the algorithm attempts to
go as far as possible in the construction of the remaining sites of
$B$ using the uniform that have been previously generated. Theorem
\ref{theo2} gives sufficient conditions for this procedure to stop
after a finite number of steps.

\subsection{Statement of the second main theorem}

\begin{theo}\label{theo2}
Consider a kernel $P$ satisfying the conditions of Theorem \ref{theo1} for some string $w\in \mathcal{E}^{\star}$. If the sequence $(\alpha^{w}_{k})_{k\geq0}$  defined by (\ref{condition1}) satisfies
\[
\sum_{k\geq0}(1-\alpha^{w}_{k})<+\infty \,\,\,\,\left(\textrm{or, equivalently $\prod_{k\geq0}\alpha_{k}^{w}>0$}\right)
\]
then Algorithm 1 stops after a $\mathbb{P}$-a.s. finite number of steps for any $-\infty<m\leq n\leq+\infty$. 
%\[
%\Prob(\theta[m,n]>-\infty)=1.
%\]
Moreover, for any $n\in\mathbb{Z}$
\begin{equation}
\label{eq:tail}
\sum_{l\geq0}\Prob(\theta[0]<-l)<+\infty.
\end{equation}
\end{theo}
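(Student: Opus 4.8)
\textbf{Proof plan for Theorem \ref{theo2}.}

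The plan is to reduce the question of finiteness of the regeneration time to a renewal-type argument on the sequence $(U_i)$, exactly as in \cite{comets/fernandez/ferrari/2002} and \cite{gallo/2009}, but now keeping track of the string $w$. The key observation is that the algorithm can close off a window spontaneously whenever it produces a run of symbols of $\mathcal{E}$ long enough to contain $w$ \emph{and then} a sufficiently long further run that lets every $I^{w}(\cdot,\cdot,k)$-lookup succeed. So first I would isolate, for each $i$, the ``good event'' $G_i$ on which $U_i,\dots,U_{i+|w|-1}$ force the symbols of $w$ (this has probability at least $\prod_{a\in w}\alpha(a)=:\delta>0$ by the definition of $\mathcal{E}$ and $w\in\mathcal{E}^{\star}$), followed by indices $i+|w|,i+|w|+1,\dots$ on which $\ell(U_{i+|w|+j})\le j$ for all $j\ge 0$ — this last event has probability $\prod_{j\ge0}\alpha^{w}_{j}>0$ by hypothesis. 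On the intersection, the site $i$ (together with everything built on top of it up to $n$) is constructed using only $U_i^{\,\cdot}$, independently of the past; that is, $i$ is a regeneration time in the sense of (\ref{formaldef}).

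Next I would make this into a stationary, ergodic (indeed i.i.d.-block-based) lower bound. Define $\bar K:=|w|+K'$ where $K'$ is a geometric-type variable governing how far forward one must look before all the $\ell(U_{\cdot})\le\cdot$ constraints are satisfied; the summability $\sum_k(1-\alpha^{w}_k)<\infty$ is exactly what guarantees $\E[\bar K]<\infty$ (Borel--Cantelli gives that only finitely many of the constraints $\ell(U_{i+|w|+j})\le j$ fail, so the required look-ahead is a.s. finite, and summability upgrades this to an integrable tail). Then the events ``$[m',m'+\bar K]$ is a spontaneous block anchored at $m'$'' occur along the negative half-line with a positive stationary density, by ergodicity of ${\bf U}$; hence with probability one there is some $m'\le m$ with $\mathcal{L}(U_{m'}^{n})=1$ — wait, one must be a little careful here since $\bar K$ is itself random and depends on the $U_i$'s inside the block, so I would instead truncate: fix $N$ with $\prod_{j=0}^{N-1}\alpha^w_j\ge \frac12\prod_{j\ge0}\alpha^w_j$ and $(1-\alpha^w_j)<\varepsilon$ for $j\ge N$, split the look-ahead into a ``bulk'' part of deterministic length $|w|+N$ with probability bounded below by a constant $c>0$, and a ``tail'' part whose failure probability is summable; a standard first-moment / Borel--Cantelli computation on disjoint blocks of length $|w|+N$ then shows $\theta[m,n]>-\infty$ a.s., which is precisely the assertion that Algorithm 1 halts.

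For the tail bound (\ref{eq:tail}) I would quantify the previous paragraph. For $l$ large, $\{\theta[0]<-l\}$ is contained in the event that \emph{none} of the disjoint blocks $[-j(|w|+N)-( |w|+N),\,-j(|w|+N)]$, $j=0,1,\dots,\lfloor l/(|w|+N)\rfloor$, is a successful bulk-block, \emph{together with} the event that some required tail-look-ahead starting in $[-l,0]$ exceeds its block — the first contributes a term $(1-c)^{\lfloor l/(|w|+N)\rfloor}$ which is geometric hence summable in $l$, and the second is bounded by $\sum_{i\le 0}\Prob(\text{tail look-ahead at }i>\,\cdot\,)$ controlled by $\sum_{k\ge N}(1-\alpha^w_k)<\infty$; summing over $l$ and using $\sum_k(1-\alpha^w_k)<\infty$ once more gives $\sum_{l\ge0}\Prob(\theta[0]<-l)<\infty$. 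By stationarity of ${\bf U}$ the same holds with $0$ replaced by any $n\in\mathbb{Z}$.

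\textbf{Main obstacle.} The delicate point is not the Borel--Cantelli bookkeeping but correctly identifying what a ``spontaneous regeneration block'' is in the mixed setting: unlike the purely continuous case of \cite{comets/fernandez/ferrari/2002}, here the look-ahead lengths are tied to the position of $w$ and to the function $m^{w}$, so one must verify that once $w$ has been written down and the subsequent $\ell(U_{\cdot})\le\cdot$ constraints hold, every call to the update function $F$ inside the block indeed returns a symbol in $A$ (never $\star$) regardless of what lies further in the past — i.e. that $c_{\tau_{\ell(U_i)}}(\cdot)\neq\emptyset$ on the good event. This requires checking against the definition (\ref{3}) of $\tau^{w}_k$ that the context present after $w$ has been produced is long enough; granting that, the probabilistic estimates are the standard ones and go through as sketched.
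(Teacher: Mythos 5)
The proposal correctly identifies the right probability-theoretic goal and even flags the ``delicate point'' in its final paragraph, but then waves it away — and that is precisely where the argument breaks. On your claimed good event you write down $w$ spontaneously at times $i,\dots,i+|w|-1$ and then require only $\ell(U_{i+|w|+j})\le j$ for all $j\ge 0$. But for $F(U_{i+|w|+j},\,[X({\bf U})]_i^{\,i+|w|+j-1})$ to return a symbol in $A$ you need $c_{\tau_{\ell(U_{i+|w|+j})}}([X({\bf U})]_i^{\,i+|w|+j-1})\neq\emptyset$, and by the definition of $\tau^{w}_l$ in (\ref{3}) that context has length $m^{w}(\text{sample})+|w|+\ell(U_{i+|w|+j})$, so the actual requirement is
\[
\ell(U_{i+|w|+j})\;\le\; j - m^{w}\big([X({\bf U})]_i^{\,i+|w|+j-1}\big).
\]
Here $m^{w}$ of the constructed suffix is $j$ as soon as no \emph{new} occurrence of $w$ has been produced after the initial one, in which case your requirement degenerates to $\ell(U_{i+|w|+j})\le 0$ for every $j$, an event of probability $\prod_j\alpha^{w}_0=0$. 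In short: a single spontaneous $w$ followed by $\ell\le j$ is not a regeneration block, and the claimed probability $\prod_{j\ge0}\alpha^w_j>0$ is computed for the wrong event. Moreover, if you replace the constraint by the correct one, it becomes a random constraint depending on the very sample you are constructing, so the product of marginals no longer applies; this is exactly the difficulty the paper singles out at the start of Section~\ref{laprova}.

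The paper's proof is built around resolving this circularity. It replaces the constraint involving $m^w(\text{constructed sample})$ by the larger, directly observable quantity $m_i$ (distance to the last \emph{spontaneous} occurrence of $w$ in ${\bf Z}$), which yields a genuine lower bound $\theta'[m,n]\le\theta[m,n]$ depending only on the uniforms. After a block-of-$|w|$ rescaling, this reduces to studying $\bar\theta[0,n]$ and then to showing transience of the zero state for the auxiliary chain ${\bf D}^{(0)}$ (equivalently ${\bf E}^{(0)}$). The key estimate there is not ``one $w$ then $\ell\le j$'' but ``wait for $M$ \emph{consecutive} spontaneous $w$'s to build a cushion of size $M$, then show that, conditionally, the cushion is never eroded back to zero with probability close to $1$ as $M\to\infty$'', using the summability of $1-\alpha^w_k$ to control the erosion. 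Spontaneous $w$'s must \emph{recur} to keep the cushion alive; a single one is not enough. This mechanism — the random-walk/renewal analysis of ${\bf E}^{(0)}$ together with the $M\to\infty$ truncation — is absent from your sketch and cannot be recovered merely by ``checking against the definition of $\tau^w_k$''; it requires a different decomposition of the good event and a different probabilistic estimate than the one you propose.
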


\begin{coro} \label{coro:2}
  The output of Algorithm 1 is a
  sample of the unique stationary chain compatible with $P$. Moreover,
   there exists a sequence of random times
  ${\bf T}={\bf T}({\bf U})$ which \emph{splits} the realization ${\bf
    X}$ into i.i.d. pieces. More specifically, the random strings
  $(\XU_{T_{i}},\ldots,\XU_{T_{i+1}-1})_{i\neq0}$ are i.i.d. and have
  finite expected size. \end{coro}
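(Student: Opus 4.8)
The plan is to obtain Corollary~\ref{coro:2} from Theorem~\ref{theo2} in four steps. By Theorem~\ref{theo2} the backward exploration stops $\mathbb{P}$-almost surely on every finite window and, letting the window grow, the map ${\bf U}\mapsto X({\bf U})\in A^{\mathbb{Z}}$ obtained by reconstructing each coordinate is well defined off a $\mathbb{P}$-null set; Algorithm~1 with input $(m,n)$ returns $X({\bf U})_{\theta[m,n]}^{n}$, so it will be enough to: (i) identify $\mu:=\mathbb{P}(X({\bf U})\in\cdot)$ as a stationary chain compatible with $P$; (ii) prove that $\mu$ is the \emph{only} such chain; (iii) exhibit the regeneration times and the i.i.d.\ block decomposition.

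For (i), stationarity is immediate because the reconstruction rule is translation invariant while the law of ${\bf U}$ is shift invariant. For compatibility I would first show that $w$ occurs $\mu$-almost surely in $X({\bf U})_{-\infty}^{-1}$: since $c_{\tau_{l}}(v)=\emptyset$ for every $l\ge0$ whenever the finite string $v$ contains no occurrence of $w$, on the event that $w$ never appears in the reconstructed past every coordinate $X_{n}$, $n\le-1$, would have to be produced by the ``fresh'' branch $\ell(U_{n})=-1$ of the update function $F$, an event of probability zero unless $P$ is the kernel of an i.i.d.\ chain (a trivial case). Hence $\mu(A^{-\mathbb{N}}(\bar w))=0$, and then $X({\bf U})_{0}=F(U_{0},X({\bf U})_{-\infty}^{-1})$ together with (\ref{lebe}) and the independence of $U_{0}$ from $X({\bf U})_{-\infty}^{-1}$ gives $\mu(X_{0}=a\mid X_{-\infty}^{-1}=\underline a)=P(a|\underline a)$ for $\mu$-a.e.\ $\underline a$, i.e.\ (\ref{compa}); in particular this reproves existence, cf.\ Corollary~\ref{coro1}. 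For (ii) I would run the usual coupling-from-the-past argument: if $\nu$ is any stationary chain compatible with $P$, then (\ref{eq:corol1}) holds (as $w\in\mathcal E^{\star}$), so a L\'evy--martingale argument shows $w$ recurs $\nu$-a.s., whence $\nu$ can be realized on the probability space of ${\bf U}$ through the recursion $X_{t}=F(U_{t},X_{-\infty}^{t-1})$ (supplemented by an independent, eventually irrelevant, initial condition at $-\infty$); tracing this recursion backward exactly as Algorithm~1 does, $X_{n}$ depends only on $U_{\theta[n]}^{n}$ with $\theta[n]>-\infty$ $\mathbb{P}$-a.s.\ by Theorem~\ref{theo2}, so the law of $(X_{m},\dots,X_{n})$ under $\nu$ equals $\mathbb{P}(X({\bf U})_{m}^{n}\in\cdot)$ for every $m\le n$. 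Thus $\nu=\mu$, and Algorithm~1 returns a sample of this unique chain.

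For (iii) I would call $t\in\mathbb{Z}$ a \emph{regeneration time} when $\ell(U_{t})=-1$ and the forward construction started from the empty past at time $t$ never returns $\star$, and let $\mathcal R$ be the resulting shift-equivariant random set. If $t\in\mathcal R$ then, by causality of $F$, that forward construction coincides with $X({\bf U})_{t}^{\infty}$, so $X({\bf U})_{t}^{\infty}$ is a function of $(U_{s})_{s\ge t}$ only, while $X({\bf U})_{-\infty}^{t-1}$ is always a function of $(U_{s})_{s<t}$. The crucial observation is that, \emph{given} $\{t\in\mathcal R\}$, any forward construction from an earlier time $s<t$ that reaches $t$ without failing must, at every later step $r>t$, read a context lying entirely inside $[t,r)$ (because $t\in\mathcal R$ forces the contexts used after $t$ to fit within $X({\bf U})_{t}^{r-1}$, using that two distinct leaves of a context tree are suffix-incomparable), hence it merges with the fresh construction at $t$ and never fails afterwards; consequently, on $\{t\in\mathcal R\}$, membership of each $s<t$ in $\mathcal R$ is a function of $(U_{j})_{s\le j<t}$ alone. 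Therefore $\{t\in\mathcal R\}\in\sigma((U_{s})_{s\ge t})$ and, on that event, everything concerning $\mathcal R$ and $X({\bf U})$ strictly below $t$ depends only on the independent block $(U_{s})_{s<t}$ --- this is the renewal property. Since Theorem~\ref{theo2} (applied with $n=+\infty$) gives $\mathcal R\cap(-\infty,0]\neq\emptyset$ a.s., stationarity forces $\rho:=\mathbb{P}(0\in\mathcal R)>0$, so $\mathcal R$ is a.s.\ bi-infinite; writing $\dots<T_{-1}<T_{0}\le0<T_{1}<\dots$ for its points and applying the renewal property successively at the $T_{i}$, the excursions $\big((X_{T_{i}},\dots,X_{T_{i+1}-1}),\,T_{i+1}-T_{i}\big)_{i\neq0}$ are i.i.d.\ with the common Palm law, and Kac's formula gives $\mathbb{E}[T_{i+1}-T_{i}]=1/\rho<\infty$ for $i\neq0$ --- which is exactly the asserted regeneration decomposition with blocks of finite expected size.

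I expect the main obstacle to be step (iii), and within it the ``merging at $t$'' point: upgrading $\{t\in\mathcal R\}$ to a genuine renewal is delicate because a priori membership in $\mathcal R$ depends on the whole future, and the fact that it can be truncated at $t$ on the event $\{t\in\mathcal R\}$ is precisely the ingredient of the regeneration scheme of \cite{comets/fernandez/ferrari/2002} (and \cite{gallo/2009}) that must be re-established for the context-tree decomposition of Theorem~\ref{theo1} rather than for a Markov mixture. The two ancillary facts used above --- that $w$ recurs $\mu$- and $\nu$-almost surely, so that $F$ evaluated on infinite pasts is a.s.\ governed by (\ref{lebe}) --- are comparatively short but also need the arguments indicated.
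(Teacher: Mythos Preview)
Your proposal is correct and follows precisely the route the paper indicates: the paper does not actually prove Corollary~\ref{coro:2} but defers to \cite{comets/fernandez/ferrari/2002} (Proposition~6.1, Corollaries~4.1 and~4.3), and what you have written is a faithful expansion of that CFTP argument, with the one genuinely new ingredient---the ``merging at $t$'' step for a mixture of unbounded context trees rather than of Markov kernels---correctly isolated and handled via the suffix-incomparability of leaves. Your use of Kac's formula for the expected block size is an alternative to the paper's route through (\ref{eq:tail}), but both yield the same conclusion.
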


The proof of Corollary \ref{coro:2} using the CFTP algorithm and
Theorem \ref{theo2} is essentially the same as
\cite{comets/fernandez/ferrari/2002} (Proposition 6.1, Corollary 4.1
and Corollary 4.3). We omit these proofs in the present work and just
mention the main ideas. The existence statement follows once we
observe that Theorem \ref{theo2} implies that one can construct a
bi-infinite sequence $\bf{X}$ verifying for any $n \in \mathbb{Z}$,
$X_n = F(U_n, X_{-\infty}^{n-1})$. By (\ref{lebe}), this chain is
therefore compatible in the sense of (\ref{compa}). It is stationary by
construction. The uniqueness statement follows from the loss of memory
the chain inherits because of the existence of almost surely finite
regeneration times. The regeneration scheme follows from
(\ref{eq:tail}). 

\section{Proof of Theorem \ref{theo2}}\label{laprova}

%and denote 
%\[
%\ell_{i}:=\ell(U_{i}).
%\]
%Assume for one instant that $X_{i-k}^{i-1}$ has been constructed using our algorithm. We recall that when $\ell_{i}\geq0$, it tells us how much further the last occurrence of $w$ in $X_{i-k}^{i-1}$ we have to look at in order to decide $X_i$.

Let us explain what are the main steps of this proof.  To study
directly the random variable $\theta[m,n]$ is complicated, because it
depends on the construction of the states of $X({\bf U})$: in order to
construct the next state of the chain, we may need to know the
distance to the last occurrence of $w$ in the constructed sample. The
idea is to introduce, first, a new random variable
$\bar{\theta}[m,n]$, defined by (\ref{bartheta}), which can be used to
define a lower bound for $\theta[m,n]$. The advantage of
$\bar{\theta}[m,n]$ is that its definition depends on the
reconstructed sample only through the spontaneous occurrences of
$w$. Section \ref{defining1} is dedicated to the definition of this
new random variable. After that, the main problem is transformed into
the problem of showing that $\bar{\theta}[m,n]$ is itself
$\mathbb{P}$-a.s. finite. To solve this new problem, we study in
Section \ref{defining2} an auxiliary process ${\bf D}^{(0)}$, defined
by (\ref{key}). The probability of return to $0$ of ${\bf D}^{(0)}$ is
related to the distribution of $\bar{\theta}[0,n]$ through equation
(\ref{yo}). The conclusion of the proof is done in Section
\ref{defining3}, by studying the chain ${\bf D}^{(0)}$ (Lemma
\ref{aah}).

\subsection{Definition of a new random variable $\bar{\theta}[m,n]$}\label{defining1}

Define the i.i.d. stochastic chain ${\bf Z}$ which takes value
$Z_{i}=a$ if $U_{i}$ belongs to $I(a)$, and $Z_{i}=\star$
otherwise. This chain takes in account only the symbols which appear
spontaneously in $X({\bf U})$: $[X({\bf U})]_{i}=a$ whenever
$Z_{i}=a$, and in particular $[X({\bf U})]_{i-|w|+1}^{i}=w$ whenever
$Z_{i-|w|+1}^{i}=w$, for any $i\in\mathbb{Z}$.  We also define the
distance to the last spontaneous occurrence of $w$ in ${\bf Z}$ before
time $i$ as
\[
m_{i}=\inf\G\{k\geq0:Z_{i-k-|w|}^{i-k-1}=w\D\}.
\]
Suppose we already constructed a sample $[X({\bf
  U})]_{-k}^{-1}$. Since for any $n$, $[X({\bf U})]_{n-|w|+1}^{n}=w$
whenever $Z_{n-|w|+1}^{n}=w$, it follows that $m_{0}$ is   larger or
equal than $m^{w}([X({\bf U})]_{-k}^{-1})$. Denote
$\ell_{i}:=\ell(U_{i})$ and define the random variable 
\begin{equation}\label{bark}
L_{i}=\left\{
\begin{array}{ccc}
0&\textrm{ if }&Z_{i}\in \mathcal{E},\\
m_{i}+|w|+\ell_{i} &\textrm{ otherwise}.
\end{array}\right.
\end{equation}
Then, whenever $L_{0}>0$, it is larger or equal than $m^{w}([X({\bf
  U})]_{-k}^{-1}) + |w| + l_0$ which is the size of the suffix of
$[X({\bf U})]_{-k}^{-1} $ we need to know in order to construct
$[X({\bf U})]_{0}$. Before we define $\bar{\theta}[m,n]$, let us
introduce an intermediary random variable $\theta'[m,n]$ which depends
on the spontaneous occurrences of $w$. For
any $-\infty\leq m\leq n\leq+\infty$
\begin{equation}\label{thetalina}
\theta'[m,n]=\max\{k\leq m:L_{i}\leq i-k\,,\,\,i=k,\ldots,n\}.
\end{equation}
Associate to each site $i\in\{\theta'[m,n],\ldots,n\}$ an arrow going from time $i$ to time $i-L_{i}$. Definition (\ref{thetalina}) says that no arrow will pass time $\theta'[m,n]$, meaning that we can construct $[X({\bf U})]_{\theta'[m,n]}^{n}$ knowing only $U_{\theta'[m,n]}^{n}$. Therefore, $\mathcal{L}(U_{\theta'[m,n]}^{n})=1$. Since $\theta[m,n]$ is the maximum over all time indexes $k\leq m$ such that $\mathcal{L}(U_{k}^{n})=1$, it follows that
\[
\theta'[m,n]\leq\theta[m,n].
\]
%
%The rest of this section is quite technical, and we give it without further details, since it is in every points similar to the proof of Theorem 5.1 in \cite{gallo/2009}, the only difference being in the definition of $\ell$. 

The definition of $\bar{\theta}[m,n]$ is done using   the following
rescaled quantities. Consider the chain $\bar{\bf Z}$ defined by
\begin{equation}\label{barz}
\bar{Z}_{m}=\left\{
\begin{array}{ccc}
1&\textrm{ if }&U_{m|w|-i+1}\in I(w_{-i}),\,\,\,i=0,\ldots,|w|-1\\
\star &\textrm{ otherwise}.
\end{array}\right.
\end{equation} 
and the rescaled function
\[
\bar{\ell}_{i}:=\left\lceil\frac{\sup\{\ell_{j}:j=(i-1)|w|+1,\ldots,i|w|\}}{|w|}\right\rceil
\]
where for any $r\in \mathbb{R}$, $\lceil r\rceil$ denotes the smaller integer which is larger or equal to $r$. Using these rescaled quantities, we define the corresponding random variables
\[
\bar{m}_{i}=\inf\G\{k\geq0:\bar{Z}_{i-k-1}=1\D\}
\]
which is the distance to the last occurrence of $1$ in $\bar{Z}_{-\infty}^{i-1}$ and 
%satisfies the inequality $m_{i|w|}\leq (\bar{m}_{i}+1)|w|-1$, and 
\begin{equation}\label{bark}
\bar{L}_{i}=\left\{
\begin{array}{ccc}
0&\textrm{ if }&\bar{Z}_{i}=1,\\
\bar{m}_{i}+1+\bar{\ell}_{i} &\textrm{ otherwise}.
\end{array}\right.
\end{equation}
%Notice that for any $i\in\mathbb{Z}$ and $j=(i-1)|w|+1,\ldots,i|w|$, we have 
%\[
%m_{j}\leq(\bar{m}_{i}+1)|w|-1-(i|w|-j)
%\]
%and $\ell_{j}\leq |w|\bar{\ell}_{i}$. Now observe that these inequalities imply that for any $i\in\mathbb{Z}$ and $j=(i-1)|w|+1,\ldots,i|w|$
%\[
%L_{j}\leq|w|-1-(i|w|-j)+|w|\bar{L}_{i}.
%\]
The utility of all these new definitions lays in the fact (which is proven in details in \cite{gallo/2009}, the only difference being the definition of the function $\bar{\ell}_{i}$) that
%whenever $\bar{L}_{i}>0$,
%\[
%\bar{L}_{i}\geq L_{j}
%\]
%for $j=i-|w|+1,\ldots, i-1$. On the other hand, if $\bar{L}_{i}=0$, then $L_{j}=0$ for $j=i-|w|+1,\ldots, i-1$. Finally, defining 
the rescaled random variable
\begin{equation}\label{bartheta}
\bar{\theta}[0,n]:=\max\{k\leq 0:\bar{L}_{i}\leq i-k\,,\,\,i=k,\ldots, n\}
\end{equation}
satisfies the inequality
\[
(\bar{\theta}[0,n]-1)|w|+1\leq \theta'[0,n|w|]\leq\theta[0,n|w|]
\]
for any $n\geq0$.  All we need to study now is the
distribution of $\bar{\theta}[0,n]$. This is done in Sections
\ref{defining2} and \ref{defining3}.

To clarify the relationship between $\bar{\theta}[0,n]$,
$\theta'[0,n|w|]$ and $\theta[0,n|w|]$ let us give a concrete
example.

\subsection*{Example}

Consider a kernel $P$ satisfying the conditions of Theorem
\ref{theo2}, with a string $w$ having length $|w|=3$.  Assume we are
given a sample $U_{-38}^{6}$ (which we do not specify) to which
correspond two samples $Z_{-38}^{6}$ and $\bar{Z}_{-12}^{2}$, with two
sequences of arrows $L_{-38}^{6}$ and $\bar{L}_{-12}^{2}$. The sample
$Z_{-38}^{6}$ together with the sequence of arrows $L_{-38}^{6}$ are
illustrated in the lower part of Figure \ref{fig:combine}, and the
sample $\bar{Z}_{-12}^{2}$ together with the sequence of arrows
$\bar{L}_{-12}^{2}$ are illustrated in the middle part of Figure
\ref{fig:combine}. The loops mean that $L_{i}=0$ or $\bar{L}_{i}=0$.

We have sufficient information to determine lower bounds for
$\theta[0,6]$. In fact, we can see on the lower sequence that no arrow
merging from $i\in\{-29,\ldots,6\}$ go further time $-29$, and that
$-29$ is the first time in the past satisfying this. Therefore
$\theta'[0,6]=-29$. This is a first lower bound for
$\theta[0,6]$. Another lower bound can be obtained looking at the
sequence in the middle of the figure, no arrow goes further time $-12$,
meaning that $\bar{\theta}[0,2]=-12$. Then, as we said,
$\bar{\theta}[0,2]$ satisfies inequality (\ref{bartheta}), this allows
us to use the lower bound $(\bar{\theta}[0,2]-1)|w|+1=-38$ for
$\theta[0,6]$.

\subsection{A new auxiliary chain for the study of $\bar{\theta}[0,n]$}\label{defining2}

%It is defined by $$Similar inequalities are explained in details in \cite{gallo/2009}, and for this reason we state them without further details. 
%In the present case, we only introduced them to define the main object of importance in this section:  the auxiliary chain ${\bf D}^{n}$, $n\in\mathbb{Z}$.  
For any $n\in\mathbb{Z}$, the chain ${\bf D}^{(n)}$   takes values $D_{i}^{(n)}=0$ for any $i\leq n$ and
\begin{equation}\label{key}
D_{i}^{(n)}=(i-i^{(n)}-\bar{L}_{i})\vee 0\,\,,\,\,\,\forall i\geq n+1,
\end{equation}
where $i^{(n)}:=\max\{l< i:D_{l}^{(n)}=0\}$. The behavior of this chain is explained in the upper part of Figure \ref{fig:combine}. But it is clear from its definition that if $D^{(i)}_{n}>0$ for $n=i+1,\ldots,k$ for some $k\geq i+1$, then, in the process $\bar{\bf L}$, no arrow merging from $\{i+1,\ldots ,k\}$ passes time $i+1$, meaning that $\bar{\theta}[i+1,k]=i+1$. More generally, the sequence of chains $\{{\bf D}^{(n)}\}_{n\in\mathbb{Z}}$ satisfies the equation
\[
\G\{\bar{\theta}[0,n]<-l\D\}=\bigcap_{i=-l-1}^{-1}\bigcup_{k=i+1}^{n}\G\{D_{k}^{(i)}=0\D\}.
\]
\begin{figure}
\centering
\includegraphics[scale=0.9]{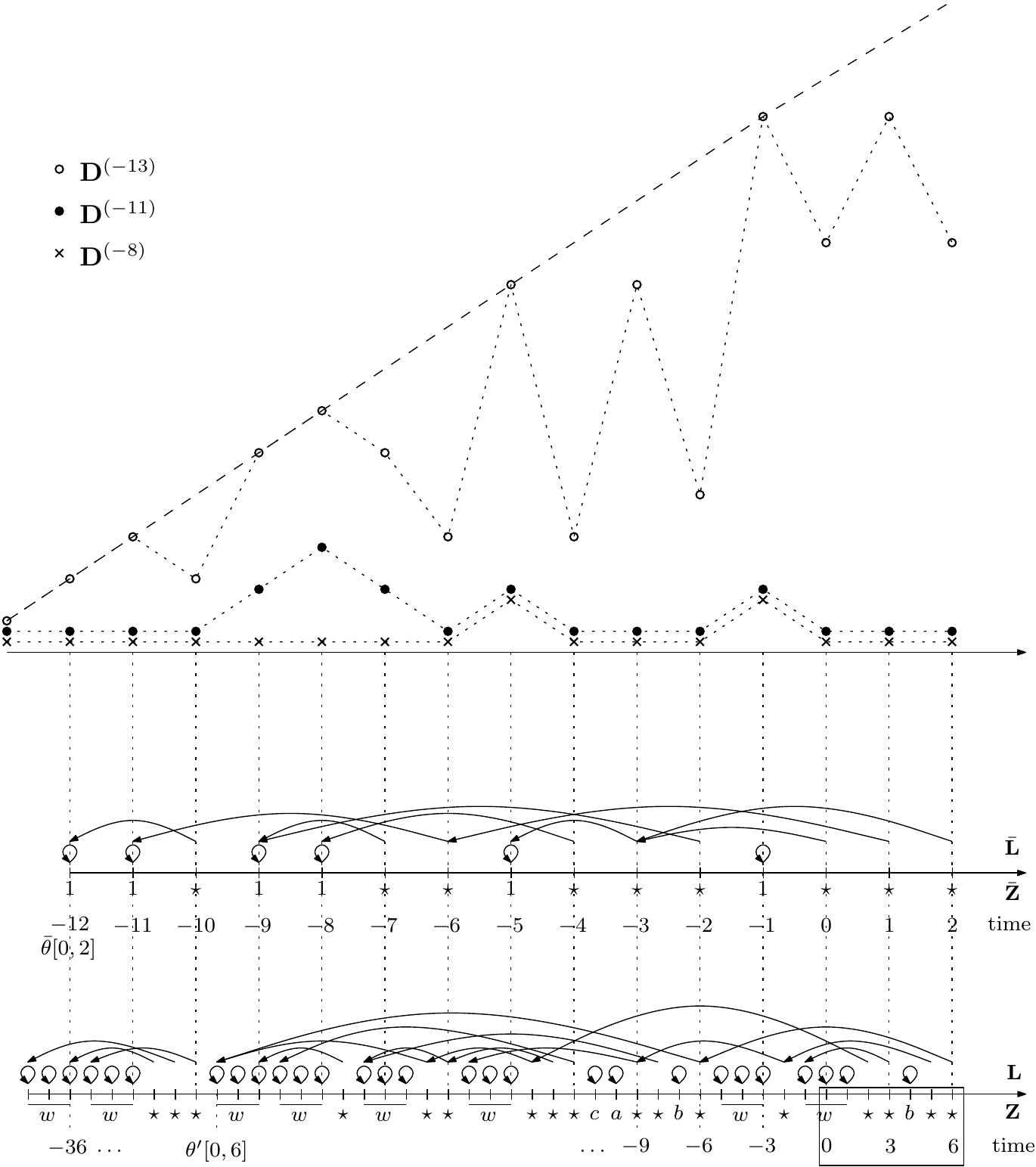}
\caption{Illustration of the inequalities of (\ref{bartheta})  (samples of ${\bf Z}$, ${\bf L}$, $\bar{\bf Z}$ and $\bar{\bf L}$) and of the behavior of the chain ${\bf D}^{(i)}$, for $i=-13,-11,-8$ constructed using the samples of $\bar{\bf Z}$ and $\bar{\bf L}$.}
\label{fig:combine}
\end{figure}
In fact, we can show in a similar way as in Section 6 of \cite{gallo/2009}, the only difference being the definition of the function $\bar{\ell}_{i}$, that
\begin{equation}\label{yo}
\Prob(\theta[0,n]<-l)\leq\sum_{k=\G\lfloor\frac{l}{|w|}\D\rfloor }^{\G\lfloor\frac{l}{|w|}\D\rfloor+\G\lceil \frac{n}{|w|}\D\rceil}\Prob(D_{k}^{(0)}=0)
\end{equation} 
where $\lfloor r\rfloor$ denotes the integer part of $r$. This
inequality relates the distribution we are interested in with the
probability of return to $0$ of the chain ${\bf D}^{(0)}$.

\subsection{Finishing the proof of Theorem \ref{theo2}}\label{defining3}
Owning to inequality (\ref{yo}), the proof of Theorem \ref{theo2} is done if we prove the following lemma. 

\begin{lemma}\label{aah}
Under the conditions of Theorem \ref{theo2}
%\[
%\sum_{k=\G\lfloor\frac{l}{|w|}\D\rfloor }^{\G\lfloor\frac{l}{|w|}\D\rfloor+\G\lceil \frac{n}{|w|}\D\rceil}\Prob(D_{k}^{(0)}=0)
%\]
%goes to $0$ as $l$ goes to infinity for any $0\leq n\leq+\infty$. In particular
\[
\sum_{l\geq1}\Prob(D_{l}^{(0)}=0)<+\infty.
\]
\end{lemma}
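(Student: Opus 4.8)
The plan is to show that the chain $\mathbf{D}^{(0)}$ returns to $0$ rarely enough to be summable, by dominating it above by a simpler renewal-type process whose inter-return distribution has a light tail controlled by the hypothesis $\sum_{k\geq 0}(1-\alpha^w_k)<+\infty$. First I would record the structure of $\mathbf{D}^{(0)}$: between two consecutive visits to $0$, say at times $i^{(0)}$ and the next zero, the chain behaves like a downward-drifting walk whose increments are governed by $\bar L_i$, and crucially $D^{(0)}_i=0$ forces, via (\ref{key}), that $i-i^{(0)}=\bar L_i$, i.e. the ``arrow'' from $i$ reaches exactly back to the previous regeneration. So a return to $0$ at a large time requires either a spontaneous occurrence of $w$ at the rescaled scale (which happens with fixed positive probability $\epsilon^{|w|}$-ish per block, hence geometrically) or a long arrow $\bar L_i$ compensating a long stretch since the last zero. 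The quantitative content of condition (\ref{condition1}) is that $\Prob(\bar\ell_i\geq r)$ and $\Prob(\bar m_i\geq r)$ decay, and more precisely $\Prob(\bar L_i > r)$ is summable in $r$; this is exactly what $\sum_k(1-\alpha^w_k)<\infty$ buys after the rescaling by $|w|$.

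The key steps, in order: (1) Write $\Prob(D^{(0)}_l=0)$ using the last-zero decomposition: condition on $i^{(0)}$ for the block covering $l$, so that $\Prob(D^{(0)}_l=0)=\sum_{j<l}\Prob(\text{last zero before } l \text{ is } j)\,\Prob(\bar L_l = l-j \mid \cdots)$, and use that the event $\{\bar L_l=l-j\}$ is essentially independent of the past before $j$ (the $\bar Z$'s and $\bar\ell$'s driving it are block-i.i.d.), reducing the bound to $\sum_{j}\Prob(\bar\theta \text{ has a zero at } j)\,\Prob(\bar L_0 \geq l-j)$ plus the ``spontaneous'' geometric term. (2) Sum over $l$: $\sum_l \Prob(D^{(0)}_l=0) \leq C\sum_l \sum_{j\leq l}\rho_j\,\Prob(\bar L_0\geq l-j)$ where $\rho_j=\Prob(D^{(0)}_j=0)$; this is a discrete convolution, so if $\sum_r \Prob(\bar L_0\geq r)=:S<1$ we would immediately get $\sum_l\rho_l \leq \rho_0/(1-S)<\infty$ by a Gronwall/geometric-series argument. (3) Handle the case $S\geq 1$ by first discarding finitely many initial terms: split $\bar L_0 = \bar m_0 + 1 + \bar\ell_0$, note $\Prob(\bar\ell_0\geq r)\to 0$ summably by the hypothesis (after rescaling), and $\bar m_0$ has a geometric tail since $w$ occurs spontaneously at each rescaled site with a fixed positive probability $\geq \prod_{a\in w}\alpha(a)>0$ (here we use $w\in\mathcal E^\star$); hence $\Prob(\bar L_0\geq r)$ is summable, and by truncating at a large threshold $N$ we can make the ``tail mass'' $\sum_{r>N}\Prob(\bar L_0\geq r)$ as small as we like while the ``bulk'' only shifts the convolution by a bounded amount — this is the standard subadditivity trick for renewal sequences with summable (but not small) increments, and it yields $\sum_l\rho_l<\infty$.

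The main obstacle I expect is step (3) — more precisely, making rigorous the passage from ``$\Prob(\bar L_0\geq r)$ is summable'' to ``$\sum_l\rho_l<\infty$'' when the total mass $S$ of the increment tail is not below $1$. The convolution inequality $\rho_l \leq C\sum_{j}\rho_{l-j}\,\Prob(\bar L_0\geq j)$ does not by itself close if $S\geq 1$; one has to exploit that $\bar\theta$'s zeros are \emph{sparser} than a pure renewal process because a zero at $l$ additionally requires the stretch $l - i^{(0)}$ to match $\bar L_l$ exactly rather than merely be exceeded, or alternatively one reorganizes around the spontaneous occurrences of $w$, which form a genuine i.i.d. renewal structure with finite mean gap, and shows that between two such occurrences the number of $\mathbf{D}^{(0)}$-zeros is stochastically bounded with summable tail using only $\sum_k(1-\alpha^w_k)<\infty$. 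I would follow the bookkeeping of Section 6 of \cite{gallo/2009} almost verbatim, the only genuinely new point being that the rescaled drift function $\bar\ell_i$ is now a block-supremum of the $\ell_j$'s, so I must check that $\sum_r \Prob(\bar\ell_0\geq r) \leq \frac{1}{|w|}\sum_{j}\Prob(\ell_0\geq j|w|)\cdot|w| + O(1)$ is still finite — which follows because $\Prob(\ell_0\geq k)=1-\alpha^w_{k-1}$ and the hypothesis is precisely $\sum_k(1-\alpha^w_k)<\infty$, so the block-supremum rescaling costs only a constant factor.
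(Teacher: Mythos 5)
The general direction (reduce to a renewal-type estimate, argue that returns to $0$ are rare because the jump distribution has a summable tail) is in the right spirit, but your step (3) — exactly the step you flag as the main obstacle — has a genuine gap, and one of the two escape routes you offer is actually wrong.

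First, your ``alternative'' of organizing around the spontaneous occurrences of $w$ and showing the number of ${\bf D}^{(0)}$-zeros between two such occurrences is stochastically bounded with summable tail cannot give the conclusion. The spontaneous $w$-occurrences at the rescaled level happen with positive density (probability at least $\epsilon^{|w|}>0$ per rescaled site), so they are infinitely many. If each inter-occurrence gap contributed even a uniformly positive expected number of ${\bf D}^{(0)}$-zeros, the total expected number of zeros on $[1,\infty)$ — which is precisely $\sum_{l\geq1}\Prob(D_l^{(0)}=0)$ — would be infinite. Boundedness per gap is not enough; you need the probability of \emph{any} zero occurring in the far future to vanish. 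The ``subadditivity trick'' phrasing similarly does not identify the mechanism: with $S=\sum_r\Prob(\bar L_0\geq r)\geq1$, the convolution inequality $\rho_l\leq C\sum_j\rho_{l-j}\Prob(\bar L_0\geq j)$ simply does not close, and truncation does not rescue it because the bulk mass is not small.

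What the paper actually does is different in a way your sketch does not reach. It introduces an auxiliary chain ${\bf E}^{(0)}$ obtained from ${\bf D}^{(0)}$ by freezing: ${\bf E}^{(0)}$ either jumps up to the current age $i-i^{(0)}$ (if a spontaneous $w$ occurs), stays constant, or drops to $0$, and the drop probability from height $n$ is at most $|w|(1-\alpha_{(n-1)|w|})$, which \emph{decreases} in $n$. State $0$ is renewal for ${\bf E}^{(0)}$ and $\Prob(D_l^{(0)}=0)=\Prob(E_l^{(0)}=0)$, so by Feller's criterion it suffices to show $\sum_i f_i<1$, where $f_i=\Prob(\zeta=i)$ is the first-return distribution. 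The key trick is then to split on the event $\{\bar Z_1^M=1^M\}$: with probability at least $\epsilon^{|w|M}$ the chain is pushed immediately up to height $M$, and from there the total probability of ever dropping to $0$ is bounded (after a convolution computation) by a constant times $\sum_{k\geq M}(1-\alpha_{(k-1)|w|})$. The hypothesis $\sum_k(1-\alpha_k^w)<\infty$ makes this tail arbitrarily small as $M\to\infty$, so for $M$ large enough the favorable event contributes strictly less than $\epsilon^{|w|M}$ to $\sum_i f_i$ while the unfavorable event contributes at most $1-\epsilon^{|w|M}$, giving $\sum_i f_i<1$. This ``pay $\epsilon^{|w|M}$ to start high and exploit the decaying drop probability from a high state'' device is the heart of the argument, and it is the piece missing from your proposal; a purely convolution-based bound with an i.i.d.-type increment tail cannot see the state-dependence of the drop probability, which is what the summability condition is really buying.
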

\begin{proof}
For the clarity of the presentation, let us consider the chain ${\bf E}^{(0)}$ which is defined using ${\bf D}^{(0)}$ as follows. $E_{i}^{(0)}=0$ for $i\leq0$, and for $i\geq1$
\begin{equation}
E^{(0)}_{i}=\left\{
\begin{array}{ccc}
D^{(0)}_{i}&\textrm{whenever }&D_{i}^{(0)}=i-i^{(0)}\,\,\textrm{or}\,\,0\\
E_{i-1}^{(0)}&\textrm{otherwise}.
\end{array}
\right.
\end{equation}
The behavior of ${\bf E}^{(0)}$ is easier to understand than ${\bf
  D}^{(0)}$ and their relationship is illustrated in Figure \ref{fig:combine2} for given
samples $\bar{\bf Z}$ and $\bar{\bf L}$. At time $j>0$, supposing that
$E^{(0)}_{j-1}=n\geq0$,
\begin{equation}\label{31}
E_{j}^{(0)} = \left\{
\begin{array}{lll}
j-j^{(0)} &\textrm{if $U_{j|w|-i+1}\in I(w_{-l})$ for any $i=0,\ldots,|w|-1$, (i.e. $\bar{Z}_{j}=1$)}\\
n&\textrm{if $\alpha_{-1}\leq U_{j|w|-i+1}<\alpha_{n-1}$ for any $i=0,\ldots,|w|-1$}\\
0&\textrm{if $ U_{j|w|-i+1}\geq\alpha_{n-1}$ for some $i=0,\ldots,|w|-1$.}
\end{array}
\right.
\end{equation}

It is clear  that $\Prob(D_{l}^{(0)}=0)=\Prob(E_{l}^{(0)}=0)$ and that the state $0$ is renewal for ${\bf E}^{(0)}$. It follows from \citet[][Chapter XIII.10, Theorem 1]{feller/1968} that $\mathbb{P}(E_{k}^{(0)}=0)$ is summable in $k$ if and only if the state $0$ is transient. Denote by $\zeta$ the first time after time $0$ that the chain ${\bf E}^{(0)}$ returns to the state $0$, and for $k\geq1$ we put $f_{k}=\mathbb{P}(\zeta=k)$. 
\begin{figure}[htp]
\centering
\includegraphics[scale=0.9]{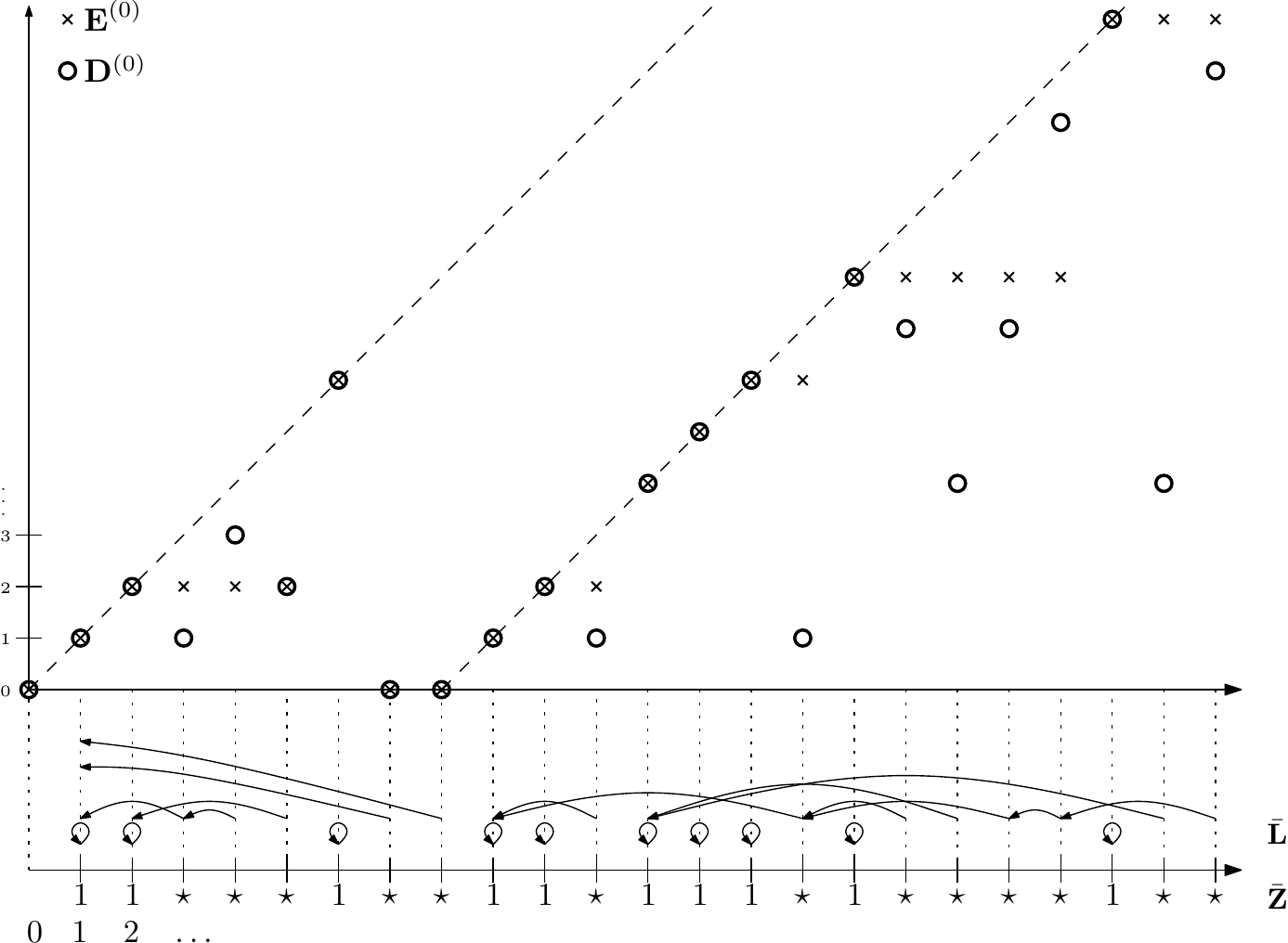}
\caption{Figure illustrating the behavior of the chains ${\bf E}^{(0)}$ and ${\bf D}^{(0)}$ together, both using the samples of $\bar{\bf Z}$ and $\bar{\bf L}$.}
\label{fig:combine2}
\end{figure}

We want to show that under the assumption of Theorem \ref{theo2}, the
state $0$ is transient for ${\bf E}^{(0)}$. Let us denote by $G_{i,l}$
the event $\{\zeta=i\}\cap\{\bar{Z}_{1}^{l}=1^{l}\}$ for $l\geq1$ and
$i\geq l+1$. If $\bar{Z}_{1}^{M}=1^{M}$ for some $M\geq1$, then $E_{M}^{(0)}=M$, $G_{i,M}=\emptyset$ for $i\leq M$ and for any $i\geq M+1$
\[
G_{i,M}=\{\bar{Z}_{1}^{M}=1^{M}\}\cap\{E_{j}^{(0)}\geq M:j=M+1,\ldots,i-2\}\cap\bigcup_{k=M}^{i-1}\{E_{i-1}^{(0)}=k\}
\cap\{E_{i}^{(0)}=0\}.
\]
The definition of ${\bf E}^{(0)}$ implies that whenever $\{\zeta=i\}$ for $i\geq1$, 
\[
\{E_{i-1}^{(0)}=k\}=\{E_{j}^{(0)}=k\,\textrm{ for }\, j=k,\ldots,i-1\}
\]
for $1\leq k\leq i-1$. It follows that
\[
G_{i,M}=\{\bar{Z}_{1}^{M}=1^{M}\}\cap\bigcup_{k=M}^{i-1}\{E_{j}^{(0)}\geq M:j=M+1,\ldots,k-1\}
\]
\[
\cap\{E_{j}^{(0)}=k:j=k,\ldots,i-1\} \, \cap \, \{E_{i}^{(0)}=0\}.
\]
Using (\ref{31}) and the chain ${\bf U}$, one obtains that for $m\geq1$ and $i\geq M+1$
\begin{equation}\label{equation}
G_{i,M}=\bigcup_{k=M}^{i-1}\left\{\begin{array}{c}\underbrace{\left\{\bar{Z}_{1}^{M}=1^{M}\right\}\cap\left\{E_{j}^{(0)}\geq M:j=M+1,\ldots,k-1\right\}}_{\textrm{event}\,\,\mathcal{B}}\\
\cap\underbrace{\{\bar{Z}_{k}=1\}\cap\bigcap_{l=k+1}^{i-1}\bigcap_{m=(l-1)|w|+1}^{l|w|}\{\alpha_{-1}\leq U_{m}<\alpha_{(k-1)|w|}\}}_{\textrm{event}\,\,\mathcal{C}}\\
\cap\underbrace{\bigcup_{j=(i-1)|w|+1}^{i|w|}\{U_{j}\geq\alpha_{(k-1)|w|}\}}_{\textrm{event}\,\,\mathcal{D}}
\end{array}\right\}
\end{equation}
where the event $\mathcal{B}$ is $\mathcal{F}(U_{1}^{(k-1)|w|})$-measurable, the event $\mathcal{C}$ is $\mathcal{F}(U_{(k-1)|w|+1}^{(i-1)|w|})$-measurable and the event $\mathcal{D}$ is $\mathcal{F}(U_{j=(i-1)|w|+1}^{i|w|})$-measurable. Therefore, they are independents. Recall that $w\in\mathcal{E}^{\star}$ and assume that 
\[
\inf_{i=1,\ldots,|w|}\inf_{\underline{z}}P(w_{-i}|\underline{z})=\epsilon>0.
\]
Using the partition
\[
\bigcup_{i\geq1}\{\zeta=i\}=\left(\bigcup_{i\geq1}\{\zeta=i\}\cap\{\bar Z_{1}^{M}=1^{M}\}\right)\cup \left(\bigcup_{i\geq1}\{\zeta=i\}\cap\{\bar Z_{1}^{M}\neq1^{M}\}\right),\,\,\,\,\forall M\geq1
\]
one obtains the following upper bound (recall that $G_{i,M}=\emptyset$ for $i\leq M$):
\[
\sum_{i\geq1}f_{i}\leq\sum_{i\geq M+1}\mathbb{P}(G_{i,M})+(1-\epsilon^{|w|M}),
\]
which holds \emph{for any } $M\geq1$.
Using the fact that $\alpha_{k}\leq1$ for any $k$, we have
\begin{eqnarray*}
\mathbb{P}(\mathcal{B}) &\le& \epsilon^{|w|M}, \\
\mathbb{P}(\hspace{0.5mm}\mathcal{C}) & = & (\alpha_{|w|(k-1)} -
\alpha_{-1})^{|w|(i-k-1)} \le  (1 -\alpha_{-1})^{|w|(i-k-1)}\\
\mathbb{P}(\mathcal{D}) &\le& |w| (1 - \alpha_{|w|(k-1)}).
\end{eqnarray*}

Therefore,  equality
(\ref{equation}) gives us the following upper bound for any  $i\geq M+1$
\[
\mathbb{P}(G_{i,M})\leq |w|\epsilon^{|w|M}\sum_{k=M}^{i-1}(1-\alpha_{-1})^{|w|(i-k-1)}(1-\alpha_{(k-1)|w|}).
\]
We have 
%Let us show that under the conditions of Theorem \ref{theo2}, 
%\[
%\sum_{i\geq1}\sum_{k=1}^{i-1}(\alpha_{(k-1)|w|}-\alpha_{-1})^{(i-k-1)|w|}(1-\alpha_{(k-1)|w|})<+\infty.
%\]
\[
\sum_{i\geq M+1}\sum_{k=M}^{i-1}(1-\alpha_{-1})^{|w|(i-k-1)}(1-\alpha_{(k-1)|w|})
=\sum_{k\geq M}(1-\alpha_{(k-1)|w|})\sum_{i\geq k+1}(1-\alpha_{-1})^{(i-k-1)|w|}
\]
where we interchanged  the order of the sums, this last equation yields
\begin{equation}\label{enfin}
\sum_{i\geq1}f_{i}\leq\frac{|w|\epsilon^{|w|M}}{1-(1-\alpha_{-1})^{|w|}}\sum_{k\geq M}(1-\alpha_{(k-1)|w|})+(1-\epsilon^{|w|M}).
\end{equation}
Under the conditions of Theorem \ref{theo2}, $\sum_{k\geq M}(1-\alpha_{(k-1)|w|})$ goes to $0$ as $M$ increases. This means that the right hand side of (\ref{enfin}) is strictly smaller than $1$ for some sufficiently large $M$, and it follows that $\sum_{i\geq1}f_{i}<1$. This finishes the proof of Lemma \ref{aah}.
\end{proof}

%\begin{proof}[Proof of Theorem \ref{theo2}]
%%By stationarity, we have for any $-\infty<m\leq n\leq+\infty$
%%\[
%%\mathbb{P}(\theta[m,n]>-\infty)=\mathbb{P}(\theta[0,n-m]<-l),
%%\]
%%and u
%Follows directly from inequality (\ref{yo}) and Lemma \ref{aah}.
%\end{proof}
\section{Conclusion}

\cite{comets/fernandez/ferrari/2002} use the uniform continuity
assumption $\alpha_k^{CFF} \rightarrow 1$. Perfect simulation under a
weaker condition was done recently by \cite{desantis/piccioni/2010}
requiring only punctual continuity, ie, $\alpha^{CFF}_k(\underline{a})
\rightarrow 1$ for any $\underline{a}$ in the set of ``admissible
histories'' (see Section 2 therein). Our extension allows to consider
kernels $P$ having discontinuities along all the points
$\underline{a}\in A^{-\infty}(\bar{w})$, for any
$w\in\mathcal{E}^{\star}$, and \emph{a priori}, no assumption is made
on the set of ``admissible histories'', so that it is generically the
set $A^{-\mathbb{N}}$. 
More specifically, consider a transition probability kernel $P$ such
that $\alpha_{k}^{CFF}$ satisfies 
$\sum_{k\geq0}(1-\alpha^{CFF}_{k})<+\infty$. It follows that, for any
$w\in\mathcal{E}^{\star}$,
\[
\sum_{k\geq0}(1-\alpha^{w}_{k})<+\infty.
\]
Now consider any $\tilde{P}$ satisfying that
$\tilde{\alpha}^{w}_{k}=\alpha^{w}_{k}$ and allowing discontinuities
along branches not containing the string $w$. Theorem \ref{theo2} says
that we still can make a perfect simulation of the unique stationary
chain compatible with $\tilde{P}$. This shows that our result is a
strict generalization of the work of
\cite{comets/fernandez/ferrari/2002} whenever we are in the regime
$\sum_{k\geq0}(1-\alpha^{CFF}_{k})<+\infty$. 

Also, our condition does not necessarily fit under the conditions of
\cite{desantis/piccioni/2010}.  It is possible to see this checking
Equation (35) of Example 1 in their work. Their notation corresponds
to
$$a_0(-1) = \alpha(-1), \quad a_0(1) = \alpha(1) \quad \mbox{ and }
a_{\infty} = \lim_{k \rightarrow \infty} \alpha^{CFF}_{k}.$$

Taking $w=(-1)(1)$ or $w=(1)(-1)$, Theorem \ref{theo2} says that we
only need $a_0(-1)$ and $a_0(1)$ to be strictly positive without any
assumption on $a_{\infty}$. We also mention that this particular example 
was already handled by the results of \cite{gallo/2009}. \\

Let us finish with some questions. The condition
$\sum_{k\geq0}(1-\alpha^{w}_{k}) <+\infty$ guarantees that the perfect
simulation scheme stops at a finite time $\theta$ which has finite
expected value. Can we find weaker conditions such that $\theta$ is
finite a.s. but has infinite expectation? Is the minorization
assumption on our reference string ($w\in\mathcal{E}^{\star}$)
necessary to obtain a practical coupling from the past algorithm for
our class of (non-necessarily continuous) chains of infinite memory?

\bibliographystyle{jtbnew}
%\bibliography{/Users/naviana/Documents/sandro/etudes/paper/artigo.simulacao.perfeita/sandro_bibli}
%\bibliography{/Users/sandro/Documents/etudes/bibtex/sandro_bibli}
\bibliography{/Users/sandro/Dropbox/Sandro/etudes/bibtex/sandro_bibli}
%\bibliography{sandro_bibli}

\vspace{2cm}
 
Sandro Gallo, Nancy L. Garcia

Instituto de Matem\'atica, Estat\'{\i}stica e Computa\c c\~ao
Cient\'\i fica 

Universidade Estadual de Campinas

Rua Sergio Buarque de Holanda, 651

13083-859 Campinas, Brasil

e-mail: {\tt nancy@ime.unicamp.br}, {\tt gsandro@ime.unicamp.br}

\end{document}